\newcommand{\bh}{\mathbf{h}}
\newcommand{\ba}{\mathbf{a}}
\newcommand{\cD}{\mathcal{D}}
\newcommand{\cF}{\mathcal{F}}
\newcommand{\cT}{\mathcal{T}}
\newcommand{\cC}{\mathcal{C}}
\newcommand{\cB}{\mathcal{B}}
\newtheorem{theorem}{Theorem}[section]
\newtheorem{corollary}[theorem]{Corollary}
\newtheorem{lemma}[theorem]{Lemma}
\newtheorem{exmple}[theorem]{Example}
\newtheorem{defn}[theorem]{Definition}
\newtheorem{rmrk}[theorem]{Remark}
\newtheorem{proposition}[theorem]{Proposition}
\newenvironment{remark}{\begin{rmrk}\normalfont}{\end{rmrk}}
\begin{document}

\author{Robert Thijs Kozma    \and    Jen\H{o}  Szirmai
}

\title
{New Lower Bound for the Optimal Ball Packing Density of Hyperbolic 4-space 
\footnote{Mathematics Subject Classification 2010: 52C17, 52C22, 52B15. \newline
Keywords and phrases: Coxeter group, hyperbolic geometry, packing, simplex tiling.}}

\author{\medbreak \medbreak {\normalsize{}} \\
\normalsize Robert Thijs Kozma$^{(1)}$~and Jen\H{o}  Szirmai$^{(2)}$\\
~\\
\normalsize (1) Department of Mathematics\\
\normalsize SUNY Stony Brook\\
\normalsize Stony Brook, NY 11794-3651 USA\\
\normalsize Email: rkozma@math.sunysb.edu \\
~\\
\normalsize (2) Budapest University of Technology and Economics\\
\normalsize Institute of Mathematics, Department of Geometry \\
\normalsize H-1521 Budapest, Hungary \\
\normalsize Email: szirmai@math.bme.hu }
\date{\normalsize (\today)}

\date{8/21/2014}

%%%%%%%%%%%%%%%%%%%%%%%%%%%%%%%%%%%%%%%%%%%
\maketitle
%%%%%%%%%%%%%%%%%%%%%%%%%%%%%%%%%%%%%%%%%%%

\begin{abstract}
In this paper we consider ball packings in $4$-dimensional hyperbolic
space. We show that it is possible to exceed the conjectured $4$-dimensional
realizable packing density upper bound due to L. Fejes-T\'oth
(Regular Figures, 1964). We give seven examples of horoball
packing configurations that yield higher densities of $0.71644896\dots$,
where horoballs are centered at ideal
vertices of certain Coxeter simplices, and are invariant under the actions of their respective  
Coxeter groups.

\end{abstract}

%\newtheorem{theorem}{Theorem}
%\newtheorem{proposition}{Proposition}
%\newtheorem{lemma}{Lemma}
%\newtheorem{theorem}{Theorem}

%==================================================================%
%                             the main article                               %
%==================================================================%
\section{Introduction}

Let $X$ denote a space of constant curvature, either the $n$-dimensional sphere $\mathbb{S}^n$, 
Euclidean space $\mathbb{E}^n$, or 
hyperbolic space $\mathbb{H}^n$ with $n \geq 2$. In discrete geometry, 
it is commonly asked to find the highest possible packing density in $X$ by congruent non-overlapping balls of a given radius \cite{Be}, \cite{G--K}. 
Euclidean cases are the best explored. For example,
the densest possible lattice packings are known for $\mathbb{E}^2$ through $\mathbb{E}^8$. In higher dimensions, however, mostly only bounds are known.
Furthermore, 
no sharp bounds exist for irregular packings in $\mathbb{E}^n$ when $n>3$.  
One major recent development has been the settling of the long-standing Kepler conjecture, 
part of Hilbert's 18th problem, by Thomas Hales at the turn of the 21st century.
Hales' computer-assisted proof was largely based on a program set forth by L. Fejes T\'oth in the 1950's \cite{Ha}.

The definition of packing density is critical in hyperbolic space as shown by B\"or\"oczky \cite{B78}. For other standard examples see also \cite{G--K}, \cite{R06}. 
The most widely accepted notion of packing density considers the local densities of balls with respect to their Dirichlet--Voronoi cells (cf. \cite{B78} and \cite{K98}). In order to consider horoball packings in $\overline{\mathbb{H}}^n$, we use an extended notion of such local density. 

Let $B$ be a horoball in packing $\cB$, and $P \in \overline{\mathbb{H}}^n$ be an arbitrary point. 
Define $d(P,B)$ to be the perpendicular distance from point $P$ to the horosphere $S = \partial B$, where $d(P,B)$ 
is taken to be negative when $P \in B$. The Dirichlet--Voronoi cell $\cD(B,\cB)$ of a horoball $B$ is defined as the convex body
\begin{equation}
\cD(B,\cB) = \{ P \in \mathbb{H}^n | d(P,B) \le d(P,B'), ~ \forall B' \in \cB \}. \notag
\end{equation}
Both $B$ and $\cD$ are of infinite volume, so the usual notion of local density is
modified as follows. Let $Q \in \partial{\mathbb{H}}^n$ denote the ideal center of $B$ at infinity, and take its boundary $S$ to be the one-point compactification of Euclidean $(n - 1)$-space.
Let $B_C^{n-1}(r) \subset S$ be the Euclidean $(n-1)$-ball with center $C \in S \setminus \{Q\}$.
Then $Q \in \partial {\mathbb{H}^n}$ and $B_C^{n-1}(r)$ determine a convex cone 
$\cC^n(r) = cone_Q\left(B_C^{n-1}(r)\right) \in \overline{\mathbb{H}}^n$ with
apex $Q$ consisting of all hyperbolic geodesics passing through $B_C^{n-1}(r)$ with limit point $Q$. The local density $\delta_n(B, \cB)$ of $B$ to $\cD$ is defined as
\begin{equation}
\delta_n(\cB, B) =\varlimsup\limits_{r \rightarrow \infty} \frac{vol(B \cap \cC^n(r))} {vol(\cD \cap \cC^n(r))}. \notag
\end{equation}
This limit is independent of the choice of center $C$ for $B^{n-1}_C(r)$.

In the case of periodic ball or horoball packings, the local density defined above can be extended to the entire hyperbolic space. This local density
is related to the simplicial density function (defined below) that we generalized in \cite{Sz12} and \cite{Sz12-2}.
In this paper we will use such definition of packing density (cf. Section 3).  

The alternate method suggested by Bowen and Radin \cite{Bo--R}, \cite{R06} uses Nevo's 
point-wise ergodic theorem 
to assure that the standard Euclidean limit notion of density is 
well-defined for $\mathbb{H}^n$. First they define a metric on 
the space $\Sigma_{\mathcal{P}}$ of relatively-dense packings by compact objects, based on Hausdorff distance, corresponding to uniform convergence on compact subsets of $\mathbb{H}^n$. Then they study the measures invariant under isometries of $\Sigma_{\mathcal{P}}$ rather than individual packings. There is a large class of packings of compact objects in hyperbolic space for which such density is well-defined. 
Using ergodic methods, 
they show that if there is only one optimally dense packing of $\mathbb{E}^n$ or $\mathbb{H}^n$, up to congruence, by congruent copies of bodies from some fixed finite collection, then that packing must have a symmetry group with compact fundamental domain. Moreover, for almost any radius $r \in [0,\infty)$ the optimal ball packing in $\mathbb{H}^n$ has low symmetry. 

A Coxeter simplex is an $n$-dimensional simplex in $X$ such that 
its dihedral angles are either submultiples of $\pi$, or zero. 
The group generated by reflections on the sides of a Coxeter simplex is called a Coxeter simplex reflection group. 
Such reflections give a discrete group of isometries of $X$ with the Coxeter simplex as its fundamental domain; 
hence the groups give regular tessellations of $X$. The Coxeter groups are finite for $\mathbb{S}^n$, and infinite for $\mathbb{E}^n$ or $\mathbb{H}^n$. 

In $\mathbb{H}^n$ we allow unbounded simplices with ideal vertices at infinity $\partial \mathbb{H}^n$. 
Coxeter simplices exist only for dimensions $n = 2, 3, \dots, 9$; furthermore, only a finite number exist in dimensions $n \geq 3$. 
Johnson {\it et al.} \cite{JKRT} computed the volumes of all Coxeter simplices in hyperbolic $n$-space, see also Kellerhals \cite{K91}. 
Such simplices are the most elementary building blocks of hyperbolic manifolds,
the volume of which is an important topological invariant. 

In the $n$-dimensional space $X$ of constant curvature
 $(n \geq 2)$, define the simplicial density function $d_n(r)$ to be the density of $n+1$ spheres
of radius $r$ mutually touching one another with respect to the simplex spanned by the centers of the spheres. L.~Fejes T\'oth and H.~S.~M.~Coxeter
conjectured that the packing density of balls of radius $r$ in $X$ cannot exceed $d_n(r)$.
Rogers \cite{Ro64} proved this conjecture in Euclidean space $\mathbb{E}^n$.
The $2$-dimensional spherical case was settled by L.~Fejes T\'oth \cite{FTL}, and B\"or\"oczky \cite{B78}, who proved the following extension:
\begin{theorem}[K.~B\"or\"oczky]
In an $n$-dimensional space of constant curvature, consider a packing of spheres of radius $r$.
In the case of spherical space, assume that $r<\frac{\pi}{4}$.
Then the density of each sphere in its Dirichlet--Voronoi cell cannot exceed the density of $n+1$ spheres of radius $r$ mutually
touching one another with respect to the simplex spanned by their centers.
\end{theorem}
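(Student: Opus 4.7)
\bigskip

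\noindent\textbf{Proof proposal.} The strategy is the classical one of decomposing each Dirichlet--Voronoi cell into orthoschemes with a vertex at the centre of the enclosed ball, and then bounding the local density in each such piece by the value realised in the configuration of $n+1$ mutually tangent spheres. Throughout I work in a fixed space $X$ of constant curvature and consider a ball $B$ of radius $r$ centred at $O$ with Dirichlet--Voronoi cell $\cD=\cD(B,\cB)$.

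The first step is to produce a canonical simplicial decomposition of $\cD$. Fix a complete flag $F_0\subset F_1\subset\dots\subset F_{n-1}$ of faces of $\cD$ and attach to it the simplex whose vertices are $O$, the orthogonal foot of $O$ on the hyperplane spanned by $F_{n-1}$, the foot of that point on $F_{n-2}$, and so on down to $F_0$. Running over all maximal flags yields a decomposition of $\cD$ into orthoschemes $T=T(O;F_0,\dots,F_{n-1})$ with a right-angled chain of edges emanating from $O$. A parallel decomposition applied to the extremal configuration (the regular simplex spanned by the centres of $n+1$ mutually tangent balls of radius $r$, truncated to the Dirichlet--Voronoi cell of one of those balls) produces a reference orthoscheme $T^\ast$ of the same combinatorial type. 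The claim of the theorem then reduces, by additivity, to the local inequality
\begin{equation}
\frac{\mathrm{vol}(B\cap T)}{\mathrm{vol}(T)}\ \le\ \frac{\mathrm{vol}(B\cap T^\ast)}{\mathrm{vol}(T^\ast)}\ =\ d_n(r), \notag
\end{equation}
valid for every orthoscheme $T$ arising from the decomposition.

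To establish the local inequality I would proceed by induction on the codimension of the face used at each step. The base case is the trivial $n=1$ computation of the density of an arc in a segment. For the induction step one fixes all but one of the orthogonal feet and studies how the ratio $\mathrm{vol}(B\cap T)/\mathrm{vol}(T)$ varies as the remaining vertex moves along its prescribed edge. A direct differentiation, combined with the fact that the ball does not reach the outer faces (the radii are at most the inradii of the cells), shows the ratio is monotone in the direction of increasing distance from $O$; the extremum occurs when the outer edge has the length corresponding to tangency with the $(n+1)$-th neighbouring ball, i.e.\ precisely when $T$ coincides with $T^\ast$. In the spherical setting the restriction $r<\pi/4$ is exactly what guarantees that the relevant trigonometric quantities keep a fixed sign so that the monotonicity statement does not reverse.

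The hard part is this monotonicity lemma for the orthoscheme density: one must differentiate a volume integral of a hyperbolic (or spherical) ball against a family of truncating hyperplanes and reduce the resulting expression to something of a definite sign. The cleanest route is to use the two-point homogeneity of $X$ to foliate $B\cap T$ by concentric spheres around $O$ and rewrite the volume ratio as a $1$-dimensional integral involving the solid angle cut out of a unit sphere by the orthoscheme; then the inequality becomes a statement about how this solid angle compares with the corresponding angle in $T^\ast$, which can be handled by a further inductive application in the lower-dimensional sphere. Summing the local inequality over the orthoschemes of the decomposition of $\cD$ and dividing by $\mathrm{vol}(\cD)$ yields the theorem.
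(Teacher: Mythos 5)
This theorem is not proved in the paper at all: it is quoted as a known result and attributed to B\"or\"oczky's 1978 paper \emph{Packing of spheres in spaces of constant curvature}, so there is no in-paper argument to compare against. Judged on its own, your sketch does follow the correct general strategy (the Rogers-type subdivision of each Dirichlet--Voronoi cell into orthoschemes via flags of faces and successive perpendicular feet, followed by a density bound in each orthoscheme), but it has genuine gaps at exactly the points where the real work lies.

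First, the geometric heart of the argument is missing: to compare an arbitrary orthoscheme $T$ with the extremal one $T^{\ast}$ you need quantitative lower bounds on the distances from the ball centre $O$ to the faces of $\cD$ of \emph{every} dimension --- namely, that the distance from $O$ to any $(n-k)$-dimensional face is at least the circumradius of a regular $k$-simplex of edge $2r$. Your sketch only invokes tangency for the outermost edge; without the full chain of lower bounds the claimed identification of the extremum is unsupported, and it is precisely in establishing these bounds (and the accompanying monotonicity) that the hypothesis $r<\pi/4$ enters in the spherical case, not merely as a sign condition in a derivative. Second, the statement that ``a direct differentiation \dots shows the ratio is monotone'' compresses what is in fact a sequence of nontrivial lemmas about how the density of a ball in an orthoscheme varies as a function of several edge-length parameters simultaneously; this is the bulk of B\"or\"oczky's paper and cannot be taken for granted, especially in curved spaces where the volume element and the solid-angle foliation you propose interact nontrivially. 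Third, the orthoscheme decomposition itself needs care: the successive perpendicular feet need not lie inside the corresponding faces, so the pieces can overlap or be negatively oriented, and the ``additivity'' you invoke must be justified with signed volumes together with a verification that the density inequality survives the signed summation. As written, the proposal is a correct roadmap but not a proof.
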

In hyperbolic space, 
the monotonicity of $d_3(r)$ was proved by B\"or\"oczky and Florian
\cite{B--F64}; Marshall \cite{Ma99} 
showed that for sufficiently large $n$, 
function $d_n(r)$ is strictly increasing in variable $r$. Kellerhals \cite{K98} showed $d_n(r)<d_{n-1}(r)$, and that in cases considered by Marshall the local density of each ball in its Dirichlet--Voronoi cell is bounded above by the simplicial horoball density $d_n(\infty)$.

This upper bound for density in hyperbolic space $\mathbb{H}^3$ is $0.85327613\dots$, 
which is not realized by packing regular balls. However, it is attained by a horoball packing of
$\overline{\mathbb{H}}^3$ where the ideal centers of horoballs lie on an
absolute figure of $\overline{\mathbb{H}}^3$;
for example, they may lie at the vertices of the ideal regular
simplex tiling with Coxeter-Schl\"afli symbol $(3,3,6)$. 

In \cite{KSz} we proved that the optimal ball packing arrangement in $\mathbb{H}^3$ mentioned above is not unique. We gave several new examples of horoball packing arrangements based on totally asymptotic Coxeter tilings that yield the B\"or\"oczky--Florian upper bound \cite{B--F64}.
 
Furthermore, in \cite{Sz12}, \cite{Sz12-2} we found that 
by allowing horoballs of different types at each vertex of a totally asymptotic simplex and generalizing 
the simplicial density function to $\mathbb{H}^n$ for $(n \ge 2)$,
 the B\"or\"oczky-type density 
upper bound is no longer valid for the fully asymptotic simplices for $n \ge 3$. 
For example, in $\overline{\mathbb{H}}^4$ the locally optimal packing density is $0.77038\dots$, higher than the B\"or\"oczky-type density upper bound of $0.73046\dots$. 
However these ball packing configurations are only locally optimal and cannot be extended to the entirety of the
hyperbolic spaces $\mathbb{H}^n$. Further open problems and conjectures on $4$-dimensional hyperbolic packings are discussed in \cite{G--K--K}. 
A recent result of Jacquemet \cite{J} gives a formula for the inradius of a hyperbolic truncated $n$-simplex based on its Gram matrix.

The second-named author has several additional results on globally and locally optimal ball packings 
in $\mathbb{H}^n$, $\mathbb{S}^n$, and 
the eight Thurston geomerties arising from Thurston's geometrization conjecture 
\cite{Sz05-2}, \cite{Sz07-1}, \cite{Sz07-2}, \cite{Sz10}, \cite{Sz13-1}, \cite{Sz13-2}. 
These packing densities are global or local,
depending on whether the density obtained in a fundamental domain can or cannot be extended to the entire space.
 
In this paper we continue our investigations 
on ball packings in hyperbolic 4-space. 
Using horoball packings, allowing horoballs of different types,
we find seven counterexamples (realized by allowing up to three horoball types) 
to one of L. Fejes T\'oth's conjectures stated in the concluding section of his book Regular Figures:

\begin{quote}
Finally we draw attention to the tessellations $\{5,3,3,3\}$ of 4-dimensional hyperbolic space, the cell-inspheres and cell-circumspheres of which are also expected to form a closest packing and loosest covering. 
The corresponding densities are $(5-\sqrt{5})/4=0.690\dots$ and $(4+6\sqrt{5})/\sqrt{125}=1.557\dots$ \cite{FTL}
\end{quote}

%%%%%%%%%%%%%%%%%%%%%%%%%%%%%%%%%%%%%%%%%%%
\section{Higher Dimensional Hyperbolic Geometry}
%%%%%%%%%%%%%%%%%%%%%%%%%%%%%%%%%%%%%%%%%%%

In this paper we use the Cayley--Klein ball model, and a projective interpretation of hyperbolic geometry. This has the advantage of greatly 
simplifying our calculations in higher dimensions as compared to other models such as the Poincar\'e model. 
In this section we give a brief review of the concepts used in this paper. For a general discussion and background in hyperbolic geometry 
and the projective models of Thurston geometries see \cite{Mol97} and \cite{MSz}.

\subsection{The Projective Model}

We use the projective model in Lorentzian $(n+1)$-space
$\mathbb{E}^{1,n}$ of signature $(1,n)$, i.e.~$\mathbb{E}^{1,n}$ is
the real vector space $\mathbf{V}^{n+1}$ equipped with the bilinear
form of signature $(1,n)$
\begin{equation}
\langle ~ \mathbf{x},~\mathbf{y} \rangle = -x^0y^0+x^1y^1+ \dots + x^n y^n \label{bilinear_form}
\end{equation}
where the non-zero real vectors 
$\mathbf{x}=(x^0,x^1,\dots,x^n)\in\mathbf{V}^{n+1}$ 
and $ \mathbf{y}=(y^0,y^1,\dots,y^n)$ $\in\mathbf{V}^{n+1}$ represent points in projective space 
$\mathcal{P}^n(\mathbb{R})$. $\mathbb{H}^n$ is represented as the
interior of the absolute quadratic form
\begin{equation}
Q=\{[\mathbf{x}]\in\mathcal{P}^n | \langle ~ \mathbf{x},~\mathbf{x} \rangle =0 \}=\partial \mathbb{H}^n
\end{equation}
in real projective space $\mathcal{P}^n(\mathbf{V}^{n+1},
\mbox{\boldmath$V$}\!_{n+1})$. All proper interior points $\mathbf{x} \in \mathbb{H}^n$ satisfy
$\langle ~ \mathbf{x},~\mathbf{x} \rangle < 0$.

The boundary points $\partial \mathbb{H}^n $ in
$\mathcal{P}^n$ represent the absolute points at infinity of $\mathbb{H}^n$.
Points $\mathbf{y}$ satisfying $\langle ~ \mathbf{y},~\mathbf{y} \rangle >
0$ lie outside $\partial \mathbb{H}^n $ and are referred to as outer points
of $\mathbb{H}^n$. Take $P([\mathbf{x}]) \in \mathcal{P}^n$, point
$[\mathbf{y}] \in \mathcal{P}^n$ is said to be conjugate to
$[\mathbf{x}]$ relative to $Q$ when $\langle ~
\mathbf{x},~\mathbf{y} \rangle =0$. The set of all points conjugate
to $P([\mathbf{x}])$ form a projective (polar) hyperplane
\begin{equation}
pol(P)=\{[\mathbf{y}]\in\mathcal{P}^n | \langle ~ \mathbf{x},~\mathbf{y} \rangle =0 \}.
\end{equation}
Hence the bilinear form $Q$ in (\ref{bilinear_form}) induces a bijection
or linear polarity $\mathbf{V}^{n+1} \rightarrow
\mbox{\boldmath$V$}\!_{n+1}$
between the points of $\mathcal{P}^n$
and its hyperplanes.
Point $X [\bold{x}]$ and hyperplane $\alpha
[\mbox{\boldmath$a$}]$ are incident if the value of
the linear form $\mbox{\boldmath$a$}$ evaluated on vector $\bold{x}$ is
 zero, i.e. $\bold{x}\mbox{\boldmath$a$}=0$ where $\mathbf{x} \in \
\mathbf{V}^{n+1} \setminus \{\mathbf{0}\}$, and $\ \mbox{\boldmath$a$} \in
\mbox{\boldmath$V$}_{n
+1} \setminus \{\mbox{\boldmath$0$}\}$.
Similarly, lines in $\mathcal{P}^n$ are characterized by
2-subspaces of $\mathbf{V}^{n+1}$ or $(n-1)$-spaces of $\
\mbox{\boldmath$V$}\!_{n+1}$ \cite{Mol97}.

Let $P \subset \mathbb{H}^n$ denote a polyhedron bounded by
a finite set of hyperplanes $H^i$ with unit normal vectors
$\mbox{\boldmath$b$}^i \in \mbox{\boldmath$V$}\!_{n+1}$ directed
 towards the interior of $P$:
\begin{equation}
H^i=\{\mathbf{x} \in \mathbb{H}^d | \langle ~ \mathbf{x},~\mbox{\boldmath$b$}^i \rangle =0 \} \ \ \text{with} \ \
\langle \mbox{\boldmath$b$}^i,\mbox{\boldmath$b$}^i \rangle = 1.
\end{equation}
In this paper $P$ is assumed to be an acute-angled polyhedron
 with proper or ideal vertices.
The Grammian matrix $G(P)=( \langle \mbox{\boldmath$b$}^i,
\mbox{\boldmath$b$}^j \rangle )_{i,j} ~ {i,j \in \{ 0,1,2 \dots n \} }$  is an
indecomposable symmetric matrix of signature $(1,n)$ with entries
$\langle \mbox{\boldmath$b$}^i,\mbox{\boldmath$b$}^i \rangle = 1$
and $\langle \mbox{\boldmath$b$}^i,\mbox{\boldmath$b$}^j \rangle
\leq 0$ for $i \ne j$ where

$$
\langle \mbox{\boldmath$b$}^i,\mbox{\boldmath$b$}^j \rangle =
\left\{
\begin{aligned}
&0 & &\text{if}~H^i \perp H^j,\\
&-\cos{\alpha^{ij}} & &\text{if}~H^i,H^j ~ \text{intersect \ along an edge of $P$ \ at \ angle} \ \alpha^{ij}, \\
&-1 & &\text{if}~\ H^i,H^j ~ \text{are parallel in the hyperbolic sense}, \\
&-\cosh{l^{ij}} & &\text{if}~H^i,H^j ~ \text{admit a common perpendicular of length} \ l^{ij}.
\end{aligned}
\right.
$$
This is visualized using the weighted graph or scheme of the polytope $\sum(P)$. The graph nodes correspond 
to the hyperplanes $H^i$ and are connected if $H^i$ and $H^j$ not perpendicular ($i \neq j$).
If they are connected we write the positive weight $k$ where  $\alpha_{ij} = \pi / k$ on the edge, and  
unlabeled edges denote an angle of $\pi/3$. For examples, see the Coxeter diagrams in Table \ref{table:simplex_list}.

In this paper we set the sectional curvature of $\mathbb{H}^n$,
$K=-k^2$, to be $k=1$. The distance $d$ of two proper points
$[\mathbf{x}]$ and $[\mathbf{y}]$ is calculated by the formula
\begin{equation}
\cosh{{d}}=\frac{-\langle ~ \mathbf{x},~\mathbf{y} \rangle }{\sqrt{\langle ~ \mathbf{x},~\mathbf{x} \rangle
\langle ~ \mathbf{y},~\mathbf{y} \rangle }}.
\end{equation}
The perpendicular foot $Y[\mathbf{y}]$ of point $X[\mathbf{x}]$ dropped onto plane $[\mbox{\boldmath$u$}]$ is given by
\begin{equation}
\mathbf{y} = \mathbf{x} - \frac{\langle \mathbf{x}, \mathbf{u} \rangle}{\langle \mathbf{u}, \mathbf{u} \rangle} \mathbf{u},
\end{equation}
where $\mathbf{u}$ is the pole of the plane $[\mbox{\boldmath$u$}]$.

\subsection{Horospheres and Horoballs in $\mathbb{H}^n$}

A horosphere in $\mathbb{H}^n$ ($n \ge 2)$ is a 
hyperbolic $n$-sphere with infinite radius centered 
at an ideal point on $\partial \mathbb{H}^n$. Equivalently, a horosphere is an $(n-1)$-surface orthogonal to
the set of parallel straight lines passing through a point of the absolute quadratic surface. 
A horoball is a horosphere together with its interior. 

In order to derive the equation of a horosphere, we introduce a projective 
coordinate system for $\mathcal{P}^n$ with a vector basis 
$\bold{a}_i \ (i=0,1,2,\dots, n)$ so that the Cayley-Klein ball model of $\mathbb{H}^n$ 
is centered at $(1,0,0,\dots, 0)$, and set an
arbitrary point at infinity to lie at $A_0=(1,0,\dots, 0,1)$. 
The equation of a horosphere with center
$A_0$ passing through point $S=(1,0,\dots,0,s)$ is derived from the equation of the 
the absolute sphere $-x^0 x^0 +x^1 x^1+x^2 x^2+\dots + x^n x^n = 0$, and the plane $x^0-x^n=0$ tangent to the absolute sphere at $A_0$. 
The general equation of the horosphere is
\begin{equation}
0=\lambda (-x^0 x^0 +x^1 x^1+x^2 x^2+\dots + x^n x^n)+\mu{(x^0-x^n)}^2.
\end{equation}
Plugging in for $S$ we obtain
\begin{equation}
\lambda (-1+s^2)+\mu {(-1+s)}^2=0 \text{~~and~~} \frac{\lambda}{\mu}=\frac{1-s}{1+s}. \notag
\end{equation}
If $s \neq \pm1$, the equation of a horosphere in projective coordinates is
\begin{align}
(s-1)\left(-x^0 x^0 +\sum_{i=1}^n (x^i)^2\right)-(1+s){(x^0-x^n)}^2 & =0,
\end{align}
 and in cartesian coordinates setting $h_i=\frac{x^i}{x^0}$ it becomes
\begin{equation}  
\label{eqn:horosphere1}
\frac{2 \left(\sum_{i=1}^n h_i^2 \right)}{1-s}+\frac{4 \left(h_d-\frac{s+1}{2}\right)^2}{(1-s)^2}=1.
\end{equation}

In an $n$-dimensional hyperbolic space any two horoballs are congruent in the classical sense: each have an infinite radius.
However, it is often useful to distinguish between certain horoballs of a packing. 
We use the notion of horoball type with respect to the packing as introduced in \cite{Sz12-2}.

Two horoballs of a horoball packing are said to be of the {\it same type} or {\it equipacked} if 
and only if their local packing densities with respect to a given cell (in our case a 
Coxeter simplex) are equal. If this is not the case, then we say the two horoballs are of {\it different type}. 
For example, in the above discussion horoballs centered at $A_0$ passing through $S$ with different values for the final coordinate $s$ are of different type relative to 
an appropriate cell.

In order to compute volumes of horoball pieces, we use J\'anos Bolyai's classical formulas from the mid 19-th century:
\begin{enumerate}
\item 
The hyperbolic length $L(x)$ of a horospheric arc that belongs to a chord segment of length $x$ is
\begin{equation}
\label{eq:horo_dist}
L(x)=2 \sinh{\left(\frac{x}{2}\right)} .
\end{equation}
\item The intrinsic geometry of a horosphere is Euclidean, 
so the $(n-1)$-dimensional volume $\mathcal{A}$ of a polyhedron $A$ on the 
surface of the horosphere can be calculated as in $\mathbb{E}^{n-1}$.
The volume of the horoball piece $\mathcal{H}(A)$ determined by $A$ and 
the aggregate of axes 
drawn from $A$ to the center of the horoball is
\begin{equation}
\label{eq:bolyai}
vol(\mathcal{H}(A)) = \frac{1}{n-1}\mathcal{A}.
\end{equation}
\end{enumerate}
%%%%%%%%%%%%%%%%%%%%%%%%%%%%%%%%%%%%%%%%%%%%%%%%%%%%%%%
\section{Horoball packings of Coxeter Simplices with Ideal Verticies}

Let $\cT$ be a Coxeter tiling. A rigid motion mapping one cell of $\cT$ onto
another maps the entire tiling onto itself. The symmetry group of a Coxeter tiling
 is its Coxeter group, denoted by $\Gamma_\cT$. 
Any simplex cell of $\cT$ acts as a fundamental domain $\cF_{\cT}$
of $\Gamma_\cT$, where the Coxeter group is generated by reflections on the $(n - 1)$-dimensional facets of $\cF_{\cT}$. 
In this paper we consider only asymptotic Coxter simplices, i.e. ones that have at least one ideal vertex. In Table \ref{table:simplex_list} we list the nine asymptotic Coxeter simplices that exist in hyperbolic 4-space, together with their volumes. For a complete discussion of hyperbolic Coxeter 
simplices 
and their volumes for dimensions $n \geq 3$, see Johnson {\it et al.} \cite{JKRT}. 

We define the density of a horoball packing $\mathcal{B}_{\cT}$ of a Coxeter simplex tiling $\cT$ as
\begin{equation}
\delta(\mathcal{B}_{\cT})=\frac{\sum_{i=1}^n vol(\mathcal{B}_i \cap \cF_{\cT})}{vol(\cF_{\cT})}.
\end{equation}
Here $\cF_{\cT}$ denotes the simplicial fundamental domain of tiling $\cT$, $n$ is the number of ideal vertices of $\cF_\cT$, 
and $\mathcal{B}_i$ are the horoballs centered at ideal vertices. 
We allow horoballs of different types at the asymptotic vertices of the tiling. 
A horoball type is allowed if it yields a packing: no two horoballs may have an interior point in common. 
In addition we require that no horoball extend beyond the facet opposite the vertex where it is centered so that the packing remains invariant under the actions of the Coxeter group of the tiling.
If these conditions are satisfied, we can extend the packing density from 
the simplicial fundamental domain $\cF_{\cT}$ to the entire $\mathbb{H}^4$ using the Coxeter group $\cT$ associated with a tiling.
In the case of Coxeter simplex tilings, Dirichlet--Voronoi cells coincide with the Coxeter simplices. We denote the optimal horoball packing density as
\begin{equation}
\delta_{opt}(\cT) = \sup\limits_{\mathcal{B}_{\cT} \text{~packing}} \delta(\mathcal{B}_{\cT}).
\end{equation}

\begin{table}
    \begin{tabular}{|cc|c|c|c|}
    \hline
    Coxeter & ~ & Witt & Simplex & Packing\\
    Diagram  & Notation & Symbol & Volume &Density\\
    \hline
        Simply Asymptotic & ~ & ~ & ~ & ~\\
    \hline
    \begin{tikzpicture}
	
	\draw (0,0) -- (1,0);
	\draw (1,0) -- (1.5,0.25);
	\draw (1,0) -- (1.5,-0.25);
	
	\draw[fill=black] (0,0) circle (.05);
	\draw[fill=black] (0.5,0) circle (.05);
	\draw[fill=black] (1,0) circle (.05);
	\draw[fill=black] (1.5,0.25) circle (.05);
	\draw[fill=black] (1.5,-0.25) circle (.05);
	
	\node at (1,-0.175) {$4$};
\end{tikzpicture}
  & $[4, 3^{2,1}]$ & $\overline{S}_4$ & $\pi^2/1440$ & 0.71644896 \\

 \begin{tikzpicture}

	\draw (0,0) -- (.5,0);
	\draw (.5,0) -- (1,0.25);
	\draw (.5,0) -- (1,-0.25);
	\draw (1,0.25) -- (1.5,0);
	\draw (1,-0.25) -- (1.5,0);
	
	\draw[fill=black] (0,0) circle(.05);
	\draw[fill=black] (.5,0) circle(.05);
	\draw[fill=black] (1,0.25) circle(.05);
	\draw[fill=black] (1,-0.25) circle(.05);
	\draw[fill=black] (1.5,0) circle(.05);

\end{tikzpicture}
  & $[3, 3^{[4]}]$ & $\overline{P}_4$ & $\pi^2/720$ & 0.71644896 \\

    \begin{tikzpicture}

	\draw (0,0) -- (2,0);
	
	\draw[fill=black] (0,0) circle(.05);
	\draw[fill=black] (0.5,0) circle(.05);
	\draw[fill=black] (1,0) circle(.05);
	\draw[fill=black] (1.5,0) circle(.05);
	\draw[fill=black] (2,0) circle(.05);

	\node at (.75,0.15) {$4$};
	\node at (1.75,0.15) {$4$};
	 
\end{tikzpicture}
  & $[3,4, 3, 4]$ & $\overline{R}_4$ & $\pi^2/864$ & 0.60792710 \\

    \begin{tikzpicture}
	
	\draw (0,0) -- (1,0);
	\draw (1,0) -- (1.5,0.25);
	\draw (1,0) -- (1.5,-0.25);
	
	\draw[fill=black] (0,0) circle (.05);
	\draw[fill=black] (.5,0) circle (.05); 
	\draw[fill=black] (1,0) circle (.05);
	\draw[fill=black] (1.5,0.25) circle (.05);
	\draw[fill=black] (1.5,-0.25) circle (.05);
	
	\node at (.75,0.175) {$4$};

\end{tikzpicture}

  & $[3, 4, 3 ^{1,1}]$ & $\overline{O}_4$ & $\pi^2/432$ & 0.60792710 \\

  \begin{tikzpicture}

	\draw (0,0) -- (1,0);
	\draw (0,0) -- (0.25,.5);
	\draw (0.25,.5) -- (.75,.5);
	\draw (.75,.5) -- (1,0);
	
	\draw[fill=black] (0,0) circle(.05);
	\draw[fill=black] (.5,0) circle(.05);
	\draw[fill=black] (1,0) circle(.05);
	\draw[fill=black] (0.25,.5) circle(.05);
	\draw[fill=black] (.75,.5) circle(.05);

	\node at (0,0.275) {$4$};
	\node at (1,0.275) {$4$};
 
\end{tikzpicture}
  & $[(3^2,4,3,4)]$ & $\widehat{FR}_4$ & $\pi^2/108$ & 0.71644896 \\
\hline
    Doubly Asymptotic & ~ & ~ & ~ & ~\\
\hline
    \begin{tikzpicture}
	
	\draw (0,0) -- (1,0);
	\draw (1,0) -- (1.5,0.25);
	\draw (1,0) -- (1.5,-0.25);
	
	\draw[fill=black] (0,0) circle (.05);
	\draw[fill=black] (.5,0) circle (.05);
	\draw[fill=black] (1,0) circle (.05);
	\draw[fill=black] (1.5,0.25) circle (.05);
	\draw[fill=black] (1.5,-0.25) circle (.05);
	
	\node at (1,-0.2) {$4$};
	\node at (0.25,0.15) {$4$};

\end{tikzpicture}

  & $[4,~^{3,4}_4]$ & $\overline{N}_4$ & $\pi^2/288$ & 0.71644896 \\

    \begin{tikzpicture}

	\draw (0,0) -- (.5,0);
	\draw (.5,0) -- (1,0.25);
	\draw (.5,0) -- (1,-0.25);
	\draw (1,0.25) -- (1.5,0);
	\draw (1,-0.25) -- (1.5,0);
	
	\draw[fill=black] (0,0) circle(.05);
	\draw[fill=black] (.5,0) circle(.05);
	\draw[fill=black] (1,0.25) circle(.05);
	\draw[fill=black] (1,-0.25) circle(.05);
	\draw[fill=black] (1.5,0) circle(.05);

	\node at (0.25,0.15) {$4$};
	 
\end{tikzpicture}
  & $[4, 3^{[4]}]$ & $\overline{BP}_4$ & $\pi^2/144$ & 0.71644896 \\
\hline
    Triply Asymptotic & ~ & ~ & ~ & ~\\
\hline
\begin{tikzpicture}
	
	\draw (0,0.25) -- (.5,0);
	\draw (0,-0.25) -- (.5,0);
	\draw (.5,0) -- (1, 0.25);
	\draw (.5,0) -- (1,-0.25);
	
	\draw[fill=black] (0,0.25) circle (.05);
	\draw[fill=black] (0,-0.25) circle (.05);
	\draw[fill=black] (.5,0) circle (.05);
	\draw[fill=black] (1,0.25) circle (.05);
	\draw[fill=black] (1,-0.25) circle (.05);
	
	\node at (.75,-0.3) {$4$};

\end{tikzpicture}
  & $[4, 3^{1,1,1}]$ & $\overline{M}_4$ & $\pi^2/144$ & 0.71644896 \\

  \begin{tikzpicture}

	\draw (0,0) -- (.5,0.25);
	\draw (0,0) -- (.5,-0.25);
	\draw (.5,0.25) -- (.5,0);
	\draw (.5,0.25) -- (1,0);
	\draw (.5,-0.25) -- (.5,0);
	\draw (.5,-0.25) -- (1,0);
	
	\draw[fill=black] (0,0) circle(.05);
	\draw[fill=black] (.5,0) circle(.05);
	\draw[fill=black] (1,0) circle(.05);
	\draw[fill=black] (.5,0.25) circle(.05);
	\draw[fill=black] (.5,-0.25) circle(.05);
 
\end{tikzpicture}
  & $[3^{[3]\times[]}]$ & $\overline{DP}_4$ & $\pi^2/72$ & 0.71644896 \\
\hline

    \end{tabular}
    \caption{Notation and volumes for the nine asymptotic Coxeter Simplices in $\mathbb{H}^4$.}
    \label{table:simplex_list}
\end{table}

\begin{figure}
\begin{center}
\includegraphics[height=60mm]{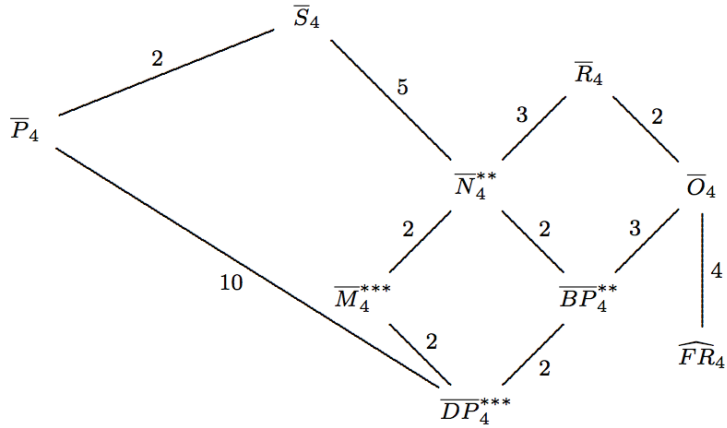}
\end{center}
\caption{Lattice of Subgroups of cocompact Coxeter groups in $\mathbb{H}^4$. The number of stars in the superscript $^{**}$ and $^{***}$ indicates that the fundamental simplex of the group has two or three ideal vertices.}
\label{fig:lattice_of_subgroups}
\end{figure}

The asymptotic Coxeter simplex tilings are related through the subgroup structure of their Coxeter symmetry groups as shown in 
Figure \ref{fig:lattice_of_subgroups} \cite{JKRT2}, \cite{JW}.
Let $\Gamma_1$ and $\Gamma_2$ be the two Coxeter symmetry groups of Coxeter tilings $\cT_1$ and $\cT_2$, respectively. 
When the index of Coxeter group $\Gamma_1$ in $\Gamma_2$ is two, 
i.e. $|\Gamma_1:\Gamma_2| = 2$, then the two Coxeter groups differ 
by one reflection, and the fundamental domain of $\Gamma_2$ is 
obtained from that of $\Gamma_1$ by domain doubling, 
that is by  merging a certain pair of neighboring domains by removing a common facet. 
In the case of asymptotic Coxeter simplices 
if $|\Gamma_1:\Gamma_2| = 2$ and the number of asymptotic vertices of the fundamental domains 
of $\Gamma_1$ and $\Gamma_2$ are equal, then the new fundamental domain is obtained 
by removing a facet adjacent to an asymptotic vertex. If the number of asymptotic vertices increases 
by one, then the cells of $\Gamma_2$ are obtained by removing a facet opposite to the asymptotic vertices 
of $\Gamma_1$ and merging the cells. The relationship between the volumes of the cells of 
$\Gamma_1$ and $\Gamma_2$ when $|\Gamma_1 : \Gamma_2|=m$ is given by 
$vol(\cF_{\cT_{1}})= m \cdot vol(\cF_{\cT_{2}})$. 
If the index of the groups is two, then a packing density 
$\delta(\cB_{\cT_{1}})$ for the bigger group can be extended to the smaller group $\Gamma_2$.

\subsection{Simply Asymptotic Cases}

We compute the optimal horoball packing density for the Coxeter simplex tiling $\overline{S}_4$; 
the other simply asymptotic cases can be obtained using the same method. Case 
$\overline{R}_4$ was computed by the second-named author in \cite{Sz05-2}. 

\begin{proposition}
\label{proposition:s4}
The optimal horoball packing density for simply asymptotic Coxeter simplex tiling $\cT_{{\overline{S}}_4}$ is $\delta_{opt}({\overline{S}}_4)
\approx 0.71644896$.
\end{proposition}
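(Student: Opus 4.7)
The plan is to compute the density directly from the definition, exploiting the fact that $\overline{S}_4$ is simply asymptotic so only one horoball type appears in the fundamental domain, which drastically simplifies the optimization. First I would set up coordinates: using the Coxeter diagram of $\overline{S}_4$ I read off the Gram matrix of outer unit normals $\mathbf{b}^i$ to the five facets, with off-diagonal entries $-\cos(\pi/k_{ij})$ where $k_{ij}$ is the label between nodes $i$ and $j$ (and $-1$ for the single pair of parallel hyperplanes meeting at the ideal vertex). Dualizing via the polarity induced by $Q$ gives the five simplex vertices in the projective model $\mathcal{P}^4$; the unique isotropic vertex $A_0$ (satisfying $\langle A_0, A_0\rangle = 0$) is the ideal one, and I would normalize the coordinate frame so that $A_0 = (1,0,0,0,1)$, placing the Cayley--Klein ball at the origin as in Section 2.

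Next I would determine the largest admissible horoball. Since only one ideal vertex is present, the horoball $B_0$ at $A_0$ must grow until it becomes tangent to the facet $H^0$ of $\overline{S}_4$ opposite $A_0$; any larger ball would violate the invariance/non-overlap condition required for extending the packing by the Coxeter group. Concretely, I would take the formula \eqref{eqn:horosphere1} for a horosphere centered at $A_0$ through $S=(1,0,\ldots,0,s)$ and compute the pedal point of $A_0$'s opposite facet $H^0$ using the projection formula $\mathbf{y}=\mathbf{x}-\frac{\langle \mathbf{x},\mathbf{u}\rangle}{\langle \mathbf{u},\mathbf{u}\rangle}\mathbf{u}$. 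The parameter $s$ is then fixed by the tangency condition, which gives a single critical horoball, hence the supremum in the definition of $\delta_{opt}$ is attained.

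Then I would compute the volume of the horoball cap $B_0 \cap \mathcal{F}_{\cT_{\overline{S}_4}}$ using Bolyai's formula \eqref{eq:bolyai}: $vol(\mathcal{H}(A)) = \frac{1}{3}\mathcal{A}$, where $\mathcal{A}$ is the Euclidean $3$-volume of the polytope $A$ cut out of the horosphere $\partial B_0$ by the four facets of the simplex incident to $A_0$. Intrinsically the horosphere is Euclidean, so $A$ is an ordinary Euclidean tetrahedron; its edges are horocyclic arcs whose lengths come from hyperbolic edge-lengths of the simplex via \eqref{eq:horo_dist}, namely $L(x)=2\sinh(x/2)$ applied to the hyperbolic distances between the perpendicular feet of $A_0$'s neighboring vertices on the horosphere (computed with the cosh-distance formula). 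With these Euclidean edge lengths, a Cayley--Menger determinant gives $\mathcal{A}$ cleanly. Finally, dividing $\frac{1}{3}\mathcal{A}$ by the tabulated value $vol(\mathcal{F}_{\cT_{\overline{S}_4}})=\pi^2/1440$ produces the density, which I would check numerically matches $0.71644896\ldots$

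The main obstacle is the bookkeeping of the projective-to-Euclidean transfer: correctly identifying which four facets contain $A_0$, computing the hyperbolic edge lengths from $A_0$ to each neighboring proper vertex (some of which may require the $\cosh l^{ij}$ interpretation of Gram entries), and pushing these through the $2\sinh(x/2)$ conversion without sign or scaling errors. Once the Euclidean tetrahedron on the horosphere is pinned down, the rest is a mechanical volume computation; the reason the same numerical density appears for several tilings in Table \ref{table:simplex_list} is presumably the subgroup lattice of Figure \ref{fig:lattice_of_subgroups}, so as a sanity check I would verify that the ratio $vol(\mathcal{H})/vol(\mathcal{F})$ is preserved under the index-$m$ relation $vol(\mathcal{F}_{\cT_1}) = m \cdot vol(\mathcal{F}_{\cT_2})$ described at the end of Section 3.
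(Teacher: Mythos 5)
Your proposal follows essentially the same route as the paper's proof: fix projective coordinates with the ideal vertex at $(1,0,0,0,1)$, determine the maximal horoball by tangency with the opposite facet via the perpendicular-foot formula (the paper finds $s=-1/9$), intersect the horosphere with the four edges through $A_0$, convert the hyperbolic distances between those intersection points to horospherical lengths via $L(x)=2\sinh(x/2)$, evaluate the Euclidean tetrahedron volume by the Cayley--Menger determinant, apply Bolyai's formula $\frac{1}{3}\mathcal{A}$, and divide by $\pi^2/1440$. The only cosmetic difference is that you derive the vertex coordinates by dualizing the Gram matrix while the paper simply tabulates them; the argument is otherwise the same.
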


\begin{proof}
Let $\cF_{{\overline{S}_4}}$ be the simplicial fundamental domain of Coxeter tiling $\cT_{\overline{S}_4}$. 
We set coordinates for its vertices $A_0, A_1, \dots, A_4$ that satisfy the angle requirements. 
Our choice of vertices, as well as forms for hyperplanes $[\mbox{\boldmath$u$}_i]$ opposite to vertices $A_i$,  
are given in Table \ref{table:data_s_a}. 
In order to maximize the packing density, we determine the largest horoball type $\mathcal{B}_0(s)$ centered at ideal vertex $A_0$ that is admissible in cell $\cF_{{\overline{S}}_4}$. This is the horoball with type-parameter 
$s$ (intuitively the ``radius" of the horoball) such that the horoball $\mathcal{B}_0(s)$ is tangent to 
the plane of the hyperface $[\mbox{\boldmath$u$}_0]$ bounding 
the fundamental simplex opposite of $A_0$. The perpendicular foot $F_0[\mbox{\boldmath$f$}_0]$ of vertex $A_0$ on plane $[\mbox{\boldmath$u$}_0]$,
\begin{equation}
\mbox{\boldmath$f$}_0 =\ba_0 - \frac{\langle \ba_0, \mathbf{u}_0 \rangle}{\langle \mathbf{u}_0,\mathbf{u}_0 \rangle} 
\mathbf{u}_0 = \left(1,0,-\frac{2}{5},\frac{1}{5},0\right),
\end{equation}
is the point of tangency of horoball $\mathcal{B}_0(s)$ and hyperface $\mathbf{u}_0$ of the the simplex cell.

Plugging in for $F_0$ and solving equation (\ref{eqn:horosphere1}), we find that the horoball with type-parameter
 $s=-\frac{1}{9}$ is the optimal type. 
The equation of horosphere $\partial \mathcal{B}_0 =\partial \mathcal{B}_0(-\frac{1}{9})$ centered at $A_0$ passing through $F_0$ is

\begin{equation}
\frac{9}{5} \left(h_1^2+h_2^2+h_3^2\right)+\frac{81}{25} \left(h_4-\frac{4}{9}\right)^2=1.
\end{equation}

\begin{figure}[h]
\begin{center}
\includegraphics[height=50mm]{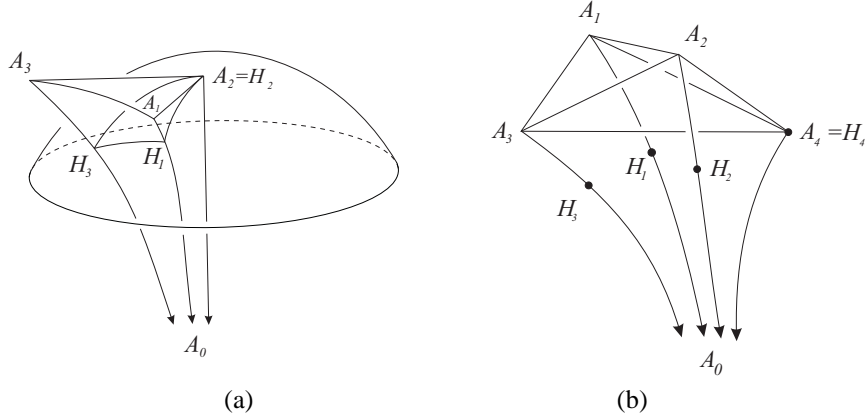}
(a)~~~~~~~~~~~~~~~~~~~~~~~~~~~~~~~~~~~~~~~~~~~~~~~~~~~~~~~(b)
\end{center}
\caption{Simply asymptotic case. (a) Horoball $\mathcal{B}_0$ intersecting the sides of the simplex at $H_1$, $H_2$, and $H_3$. (b) Horospheric tetrahedron on hyperface opposite $A_0$.}
\label{fig:horospheric_triangle}
\end{figure} 

The intersections $H_i[\bh_i]$ of horosphere $\partial \mathcal{B}_0$ and simplex edges are found by parameterizing the 
simplex edges as $\bh_i(\lambda) = \lambda \ba_0+\ba_i$ $(i=1,2,3,4)$, and computing their intersections with $\partial \mathcal{B}_0$. 
See Figure (\ref{fig:horospheric_triangle}), and Table \ref{table:data_s_a} for the intersection points.
The volume of the horospherical tetrahedron determines the volume of the horoball piece by equation \eqref{eq:bolyai}.
In order to determine the data of the horospheric tetrahedron, we compute the hyperbolic distances $l_{ij}$ by the formula (5)
$l_{ij} = d(H_i, H_j)$ where $d(\bh_i,\bh_j)= \arccos\left(\frac{-\langle \bh_i, \bh_j 
\rangle}{\sqrt{\langle \bh_i, \bh_i \rangle \langle \bh_j, \bh_j \rangle}}\right)$.
Moreover, the horospherical distances $L_{ij}$ can be calculated by formula \eqref{eq:horo_dist}.
The intrinsic geometry of the horosphere is Euclidean, so we use the 
Cayley-Menger determinant to find the volume $\mathcal{A}$ of the horospheric tetrahedron $A$,

\begin{equation}
\mathcal{A} = \frac{1}{288}
\begin{vmatrix}
 0 & 1 & 1 & 1 & 1 \\
 1 & 0 & L_{12}^2 & L_{13}^2 & L_{14}^2 \\
 1 & L_{12}^2 & 0 & L_{23}^2 & L_{24}^2 \\
 1 & L_{13}^2 & L_{23}^2 & 0 & L_{34}^2 \\
 1 & L_{14}^2 & L_{24}^2 & L_{34}^2 & 0
 \end{vmatrix} \approx 0.0147314.
 \end{equation}

The volume of the optimal horoball piece in the fundamental simplex is

\begin{equation}
vol(\mathcal{B}_0 \cap \cF_{\overline{S}_4}) = \frac{1}{n-1}\mathcal{A} \approx \frac{1}{3} \cdot 0.0147314 \approx 0.00491046.
\end{equation}

Hence by the Coxeter group $\Gamma_{\overline{S}_4}$ the optimal horoball packing density of the Coxeter Simplex tiling $\cT_{\overline{S}_4}$ becomes

\begin{equation}
\delta_{opt}(\overline{S}_4) = \frac{vol(\mathcal{B}_0 \cap \cF_{\overline{S}_4})}{vol(\cF_{\overline{S}_4})}\\
    \approx \frac{0.00491046}{\pi^2/1440} \\
    \approx 0.71644896. 
\end{equation}
\qed
\end{proof}

The same method is used to find the optimal packing density of the remaining simply asymptotic Coxeter simplex tilings. Results of the computations are given  
in Table \ref{table:data_s_a}. We summarize the results:

\begin{corollary}
The optimal horoball packing density for simply asymptotic Coxeter simplex tiling $\cT_\Gamma$, $\Gamma \in \Big\{ \overline{S}_4, 
\overline{P}_4, \widehat{FR}_4 \Big\}$ is $\delta_{opt}(\Gamma) \approx 0.71644896$.
\end{corollary}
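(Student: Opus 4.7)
The plan is to verify the claim for each of $\overline{P}_4$ and $\widehat{FR}_4$ by applying verbatim the procedure developed in the proof of Proposition \ref{proposition:s4}: choose projective coordinates for the simplex vertices compatible with the dihedral angles read off the Coxeter diagram, determine the maximal admissible horoball $\mathcal{B}_0$ at an ideal vertex $A_0$ as the one tangent to the opposite facet $[\mathbf{u}_0]$, compute the intersections $H_i$ of its boundary horosphere with the edges $A_0 A_i$, and finally evaluate the Cayley--Menger determinant of the horospheric tetrahedron to obtain the Euclidean area $\mathcal{A}$; then $vol(\mathcal{B}_0 \cap \cF) = \mathcal{A}/3$ by \eqref{eq:bolyai}, and the density is the quotient $vol(\mathcal{B}_0 \cap \cF)/vol(\cF)$ using the simplex volumes listed in Table \ref{table:simplex_list}.

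For $\overline{P}_4$ there is a shortcut that avoids redoing the whole computation. The simplex volumes satisfy $\pi^2/720 = 2 \cdot \pi^2/1440$, consistent with the doubling rule $vol(\cF_{\cT_1}) = m \cdot vol(\cF_{\cT_2})$ recalled in Section 3 for $m=2$, and a comparison of the Coxeter diagrams identifies $\Gamma_{\overline{P}_4}$ as an index-two subgroup of $\Gamma_{\overline{S}_4}$. As explained there, when the two fundamental simplices have the same number of asymptotic vertices, passing from the larger group to the smaller one amounts to removing a single facet adjacent to an ideal vertex and merging two copies of the $\overline{S}_4$-cell. Since the optimal horoball at $A_0$ in $\overline{S}_4$ is tangent to the facet opposite $A_0$ (not the one being removed) and is invariant under reflection in the shared facet, the packing extends to a $\Gamma_{\overline{P}_4}$-invariant packing of equal density, so $\delta_{opt}(\overline{P}_4) = \delta_{opt}(\overline{S}_4) \approx 0.71644896$.

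The case $\widehat{FR}_4$ does not admit such an index-two reduction to $\overline{S}_4$ since the volume ratio $1440/108$ is not an integer, so the calculation must be carried out in full. The diagram has a single ideal vertex, so there is again a unique horoball type $\mathcal{B}_0(s)$ to optimize. I would set coordinates for $A_0,\dots,A_4$ dictated by the Coxeter diagram (two edges labeled $4$ on the outer nodes together with a triangle of unlabeled edges), compute $\mathbf{u}_0$ via polarity, find the foot $F_0$ and the optimal type-parameter $s$ by substituting into \eqref{eqn:horosphere1}, compute $H_1,\dots,H_4$ along the four edges emanating from $A_0$, evaluate the six horospherical distances $L_{ij}$ using \eqref{eq:horo_dist}, and finally the Cayley--Menger determinant, arriving at the numerical value $0.71644896\dots$ after dividing by the simplex volume $\pi^2/108$.

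The main obstacle is arithmetic bookkeeping rather than anything conceptual: at each step one must choose a consistent basis in $\mathbb{E}^{1,4}$, verify that the resulting Gram matrix has signature $(1,4)$, and confirm admissibility of $\mathcal{B}_0$, i.e.\ that the maximal horoball does not cross any facet other than the one of tangency, ensuring the packing remains invariant under the full Coxeter group action. The fact that the three computed densities collapse to the common value $0.71644896\dots$ is a pleasing consistency check hinted at by the subgroup lattice of Figure \ref{fig:lattice_of_subgroups}, but for $\widehat{FR}_4$ the equality must still be certified by explicit computation.
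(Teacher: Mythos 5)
Your plan for $\widehat{FR}_4$ (and the fallback plan for $\overline{P}_4$) is exactly what the paper does: its ``proof'' of this corollary is the single sentence that the method of Proposition \ref{proposition:s4} is repeated verbatim, with all the data --- vertex coordinates, the forms $\mbox{\boldmath$u$}_i$, the optimal parameters $s=-3/19$ and $s=7/17$, the points $H_i$, and the horoball-piece volumes --- recorded in Table \ref{table:data_s_a}. So for $\widehat{FR}_4$ you and the paper coincide, and your remark that no index-two reduction to $\overline{S}_4$ is available there is correct.

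Your shortcut for $\overline{P}_4$ via the index-two relation $\Gamma_{\overline{P}_4}<\Gamma_{\overline{S}_4}$ is a genuinely different route, consistent with the domain-doubling discussion in Section 3, but as stated it only proves $\delta_{opt}(\overline{P}_4)\ge\delta_{opt}(\overline{S}_4)$: a $\Gamma_{\overline{S}_4}$-invariant packing is automatically $\Gamma_{\overline{P}_4}$-invariant and its density is unchanged on the doubled domain, but the optimization problem for $\overline{P}_4$ is over a strictly larger class. In particular, the admissibility constraint ``$\mathcal{B}_0$ must not cross the facet opposite $A_0$'' refers to the facet $[\mbox{\boldmath$u$}_0]$ of $\cF_{\overline{P}_4}$, which need not lie in the same hyperplane as the facet $[\mbox{\boldmath$u$}_0]$ of $\cF_{\overline{S}_4}$; if it were farther from $A_0$, a strictly larger horoball would be admissible and the density would exceed $0.71644896\dots$. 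To turn $\ge$ into $=$ you must still verify that the maximal admissible horoball in the doubled domain coincides with the doubled configuration --- which is precisely what the paper's direct computation certifies (the tabulated volume $0.00982093=2\cdot 0.00491046$ against the volume $\pi^2/720=2\cdot\pi^2/1440$). So either carry out the $\overline{P}_4$ computation after all, or add the one extra check on the position of the opposite facet; without it the equality claim for $\overline{P}_4$ is not yet established.
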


\begin{landscape}
\begin{table}
	\begin{tabular}{|l|l|l|l|l|l|}
		 \hline
		 \multicolumn{6}{|c|}{{\bf Coxeter Simplex Tilings} }\\
		\hline
		 Witt Symb. & $\overline{S}_4$ &  $\overline{P}_4$ &  $\overline{R}_4$ &  $\overline{O}_4$ &  $\widehat{FR}_4$ \\
		 \hline
		 \multicolumn{6}{|c|}{{\bf Vertices of Simplex} }\\
		 \hline
		 $A_0$ & $(1,0,0,0,1)$ & $(1,0,0,0,1)$ & $(1,0,0,0,1)$ & $(1,0,0,0,1)$ & $(1,0,0,0,1)$ \\
		 $A_1$ & $(1,0,\frac{-2}{5},\frac{1}{5},0)$ &$(1,0,\frac{\sqrt{22}}{11},-\frac{\sqrt{11}}{11},0)$&$(1,\frac{1}{2},\frac{1}{2},\frac{1}{2},0)$&$(1,0,\frac{\sqrt{42}}{14},\frac{\sqrt{21}}{14},\frac{1}{2})$&$(1,0,\frac{\sqrt{15}}{5},0,\frac{1}{2})$ \\
		 $A_2$ & $(1,0,0,0,-\frac{1}{4})$ & $(1,0,0,0,-\frac{3}{8})$ & $(1,\frac{1}{2},\frac{1}{2},0,0)$ & $(1,0,\frac{\sqrt{42}}{11},0,\frac{4}{11})$ & $(1,0,\frac{\sqrt{15}}{8},-\frac{\sqrt{15}}{8},\frac{11}{16})$ \\
		 $A_3$ & $(1,0,\frac{1}{4},\frac{1}{2},-\frac{1}{4})$ & $(1,0,\frac{\sqrt{22}}{9},\frac{\sqrt{11}}{9},\frac{-2}{9})$ & $(1,\frac{1}{2},0,0,0)$ & $(1,0,0,0,-\frac{2}{5})$ & $(1,0,0,0,-\frac{1}{4})$ \\
		 $A_4$ & $(1,\frac{\sqrt{10}}{8},0,0,-\frac{1}{4})$ & $(1,\frac{\sqrt{22}}{10},\frac{\sqrt{22}}{10},0,-\frac{1}{10})$ & $(1,0,0,0,0)$ & $(1,\frac{\sqrt{42}}{11},\frac{\sqrt{42}}{11},0,\frac{4}{11})$ & $(1,\frac{\sqrt{15}}{5},0,0,\frac{1}{2})$ \\
		 \hline
		 \multicolumn{6}{|c|}{{\bf The form $\mbox{\boldmath$u$}_i$ of sides opposite $A_i$ }}\\
		\hline
		 $\mbox{\boldmath$u$}_0$ & $(1, 0, 2, -1, 4)^T$ & $(1,0,-\frac{\sqrt{22}}{3},\frac{\sqrt{11}}{3},\frac{8}{3})^T$ & $(0,0,0,0,1)^T$ & $(1,0,-\frac{\sqrt{42}}{2},-\frac{\sqrt{21}}{2},\frac{5}{2})^T$ & $(1,-\sqrt{15},-\sqrt{15},\sqrt{15},4)^T$ \\
		 $\mbox{\boldmath$u$}_1$ & $(0, 0, -2, 1, 0)^T$ & $(0,\frac{\sqrt{11}}{\sqrt{22}},-\frac{\sqrt{11}}{\sqrt{22}},1,0)^T$ & $(0,0,0,1,0)^T$ & $(0, 0, 0, 1, 0)^T$ & $(0, 0, 1, 1, 0)^T$ \\
		 $\mbox{\boldmath$u$}_2$ & $(1,-\sqrt{10},1,-3,-1)^T$ & $(1,0,-\frac{\sqrt{22}}{2},0,-1)^T$ & $(0,0,-1,1,0)^T$ & $(0,-\frac{1}{\sqrt{2}},\frac{1}{\sqrt{2}},-1,0)^T$ & $(0, 0, 0, -1, 0)^T$ \\
		 $\mbox{\boldmath$u$}_3$ & $(0,0,\frac{1}{2},1,0)^T$ & $(0,-\frac{\sqrt{11}}{\sqrt{22}},\frac{\sqrt{11}}{\sqrt{22}},1,0)^T$ & $(0,-1,1,0,0)^T$ & $(1,0,\frac{\sqrt{42}}{6},0,-1)^T$ & $(1,-\frac{\sqrt{15}}{6},-\frac{\sqrt{15}}{6},0,-1)^T$ \\
		 $\mbox{\boldmath$u$}_4$ & $(0, 1, 0, 0, 0)^T$ & $(0, 1, 0, 0, 0)^T$ & $(1,-2,0,0,-1)^T$ & $(0, -1, 0, 0, 0)^T$ & $(0, 1, 0, 0, 0)^T$ \\
		 \hline
		 \multicolumn{6}{|c|}{{\bf Maximal horoball parameter $s$ }}\\
		\hline
		 $s$ & $-1/9$ & $-3/19$ & $0$ & $5/19$ & $7/17$ \\
		\hline
		 \multicolumn{6}{|c|}{ {\bf Intersections $H_i = \mathcal{B}(A_0,s)\cap A_0A_i$ of horoballs with simplex edges}}\\
		\hline
		 $H_1$ & $(1,0,-\frac{2}{5},\frac{1}{5},0)$ & $(1,0,\sqrt{\frac{2}{11}},-\frac{1}{\sqrt{11}},0)$ & $(1,\frac{4}{11},\frac{4}{11},\frac{4}{11},\frac{3}{11})$ & $(1,0,\sqrt{\frac{3}{14}},\sqrt{\frac{3}{28}},\frac{1}{2})$ & $(1,0,0,0,\frac{7}{17})$ \\
		 $H_2$ &$(1,0,0,0,-\frac{1}{9})$ & $(1,0,0,0,-\frac{3}{19})$ & $(1,\frac{2}{5},\frac{2}{5},0,\frac{1}{5})$ & $(1,0,\frac{2 \sqrt{42}}{25},0,\frac{11}{25})$ & $(1,0,\frac{4 \sqrt{15}}{29},0,\frac{19}{29})$ \\
		 $H_3$ &$(1,0,\frac{1}{5},\frac{2}{5},0)$ & $(1,0,\sqrt{\frac{2}{11}},\frac{1}{\sqrt{11}},0)$ & $(1,\frac{4}{9},0,0,\frac{1}{9})$ & $(1,0,0,0,\frac{5}{19})$ & $(1,0,\frac{4 \sqrt{15}}{41},-\frac{4 \sqrt{15}}{41},\frac{31}{41})$ \\
		 $H_4$ &$(1,\frac{2 \sqrt{10}}{19},0,0,-\frac{1}{19})$ & $(1,\frac{2 \sqrt{22}}{23},\frac{2 \sqrt{22}}{23},0,\frac{1}{23})$ & $(1,0,0,0,0)$ & $(1,\frac{2 \sqrt{42}}{31},\frac{2 \sqrt{42}}{31},0,\frac{17}{31})$ & $(1,\frac{4 \sqrt{15}}{29},0,0,\frac{19}{29})$ \\
		 \hline
		 \multicolumn{6}{|c|}{ {\bf Volume of horoball pieces }}\\
		\hline
		 $vol(\mathcal{B}_0 \cap \mathcal{F})$ & $0.00491046$ & $0.00982093$ & $0.00694444$ & $0.01388889$ & $0.0555556$ \\
		\hline
		\multicolumn{6}{|c|}{ {\bf Optimal Packing Density}}\\
		\hline
		 $\delta_{opt}$ & $0.71644896$ & $0.71644896$ & $0.60792710$ & $0.60792710$ & $0.71644896$ \\
		\hline
	\end{tabular}
	\caption{Data for simply asymptotic Tilings in Cayley-Klein ball model of radius 1 centered at (1,0,0,0,0)}
	\label{table:data_s_a}
\end{table}
\end{landscape}

\subsection{Multiply Asymptotic Cases}

In cases where the Coxeter simplex has multiple asymptotic vertices,  
we allow horoballs of different types at each vertex. 
The equations of horospheres centered at $(1,0,0,$ $0,-1)$ and $(1,0,1,0,0)$ where $h_i=\frac{x^i}{x^0}$ are

\begin{equation}
\label{eq:horoMW}
\frac{2 \left(h_1^2+h_2^2+h_3^2\right)}{s+1}+\frac{4 \left(h_4+ \frac{1-s}{2}\right)^2}{(s+1)^2}=1,
\end{equation}
and
\begin{equation}
\label{eq:horoY}
\frac{2 \left(h_1^2+h_3^2+h_4^2\right)}{1-s}+\frac{4 \left(h_2-\frac{1+s}{2}\right)^2}{(1-s)^2}=1.
\end{equation}

As in simply asymptotic cases, we first find bounds for the largest possible
horoball type admissible at each asymptotic vertex. Such a horoball is tangent to the facet opposite its center.
Next we set one horoball to be of the largest type, and increase the size of the other horoballs until they become tangent or inadmissible. 
We then vary the types of the horoballs within the allowable range to find the optimal packing density. 
The following lemma was proved in \cite{Sz12} and it gives the relationship between the volumes of two tangent horoball pieces 
centered at certain vertices of a tiling, as we continuously vary their types.

Let $\tau_1$ and $\tau_2$ be two congruent
$n$-dimensional convex pyramid-like regions with vertices at $C_1$ and $C_2$ that share the common edge $\overline{C_1C_2}$.
Let $B_1(x)$ and $B_2(x)$ denote two horoballs centered at $C_1$ and
$C_2$ tangent at point
$I(x)\in {\overline{C_1C_2}}$. Define the point of tangency $I(0)$ (the ``midpoint") such that the
equality $V(0) = 2 vol(B_1(0) \cap \tau_1) = 2 vol(B_2(0) \cap \tau_2)$
holds for the volumes of the horoball sectors. See Figure \ref{fig:2horoballs} (a).

\begin{lemma}[\cite{Sz12}]
\label{lemma:szirmai}
Let $x$ be the hyperbolic distance between $I(0)$ and $I(x)$,
then
\begin{equation}
V(x) = vol(B_1(x) \cap \tau_1) + vol(B_2(x) \cap \tau_2) = \frac{V(0)}{2}\left( e^{(n-1)x}+e^{-(n-1)x}\right) \notag
\end{equation}
strictly increases as $x\rightarrow\pm\infty$.
\end{lemma}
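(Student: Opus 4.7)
The plan is to exploit the scaling behavior of horospheric $(n-1)$-volumes under hyperbolic translation along the axis $\overline{C_1C_2}$, combined with Bolyai's formula \eqref{eq:bolyai}. The key geometric fact is that if $C$ is an ideal point and $\tau$ is a pyramidal region with apex $C$, then the walls of $\tau$ meeting at $C$ form a ``cone'' from $C$, and moving a horosphere centered at $C$ outward (away from $C$) by hyperbolic distance $d$ scales the horospheric $(n-1)$-volume of its intersection with $\tau$ by the factor $e^{(n-1)d}$.

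First, I would pass to the upper half-space model of $\mathbb{H}^n$ with $C_1$ placed at the ideal point $\infty$ and $C_2$ on the finite boundary, so that the edge $\overline{C_1C_2}$ is the vertical geodesic through $C_2$. Horospheres centered at $C_1$ appear as horizontal hyperplanes $\{x_n = h\}$ with induced horospheric metric scaled by $1/h$, so the horospheric $(n-1)$-volume of a Euclidean region $\Omega$ at height $h$ equals $\mathrm{vol}_{Euc}(\Omega)/h^{n-1}$. The walls of $\tau_1$ meeting at $C_1$ become vertical Euclidean hyperplanes, and the cross section $\partial B_1(x)\cap \tau_1$ is the same Euclidean set $\Omega$ at every admissible height. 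Since a translation of the tangency point by signed hyperbolic distance $x$ toward $C_2$ moves $\partial B_1$ from height $h_0$ to height $h_0 e^{-x}$, this yields
\begin{equation*}
\mathcal{A}_1(x) := \mathrm{vol}_{n-1}(\partial B_1(x)\cap \tau_1) = \frac{\mathrm{vol}_{Euc}(\Omega)}{(h_0 e^{-x})^{n-1}} = e^{(n-1)x}\,\mathcal{A}_1(0),
\end{equation*}
so by Bolyai's formula \eqref{eq:bolyai}, $\mathrm{vol}(B_1(x)\cap\tau_1) = \tfrac{1}{n-1}\mathcal{A}_1(x) = e^{(n-1)x}\cdot\tfrac{V(0)}{2}$. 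The symmetric calculation, applied after instead placing $C_2$ at infinity, yields $\mathrm{vol}(B_2(x)\cap\tau_2) = e^{-(n-1)x}\cdot\tfrac{V(0)}{2}$, since translation of the tangency point toward $C_2$ corresponds to $\partial B_2$ moving toward its center $C_2$, contracting the horoball piece.

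Summing the two contributions gives
\begin{equation*}
V(x) = \frac{V(0)}{2}\left(e^{(n-1)x} + e^{-(n-1)x}\right) = V(0)\cosh\!\bigl((n-1)x\bigr),
\end{equation*}
a smooth even function attaining its minimum at $x=0$ and strictly increasing on $[0,\infty)$; hence $V(x)$ strictly increases as $|x|\to\infty$.

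The main obstacle is the careful justification of the scaling identity $\mathcal{A}_1(x) = e^{(n-1)x}\mathcal{A}_1(0)$. Once coordinates are fixed and one observes that the walls of $\tau_1$ incident to $C_1$ become vertical in the model, the remainder is a direct horospheric area computation. A subtlety worth verifying is that throughout the continuous variation, the horoballs $B_i(x)$ stay within the respective cones $\tau_i$ near the apex $C_i$ (so that no extraneous facet cuts off the horospheric cross section), but this is guaranteed by the tangency and packing conditions built into the hypothesis $V(0) = 2\,\mathrm{vol}(B_i(0)\cap\tau_i)$.
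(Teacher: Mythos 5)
Your proof is correct, and it follows essentially the same route as the source the paper cites for this lemma (the paper itself states Lemma~\ref{lemma:szirmai} without proof, deferring to \cite{Sz12}): one shows that translating a horosphere by signed hyperbolic distance $x$ along the axis scales the horospheric cross-sectional $(n-1)$-volume, hence by \eqref{eq:bolyai} the horoball piece volume, by $e^{(n-1)x}$, and then sums the two contributions to get $V(0)\cosh\left((n-1)x\right)$. Your half-space computation of the scaling factor and your closing remark about the horoballs staying inside the cones near the apices are exactly the right points to pin down.
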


\begin{figure}[h]
\begin{center}
\includegraphics[height=20mm]{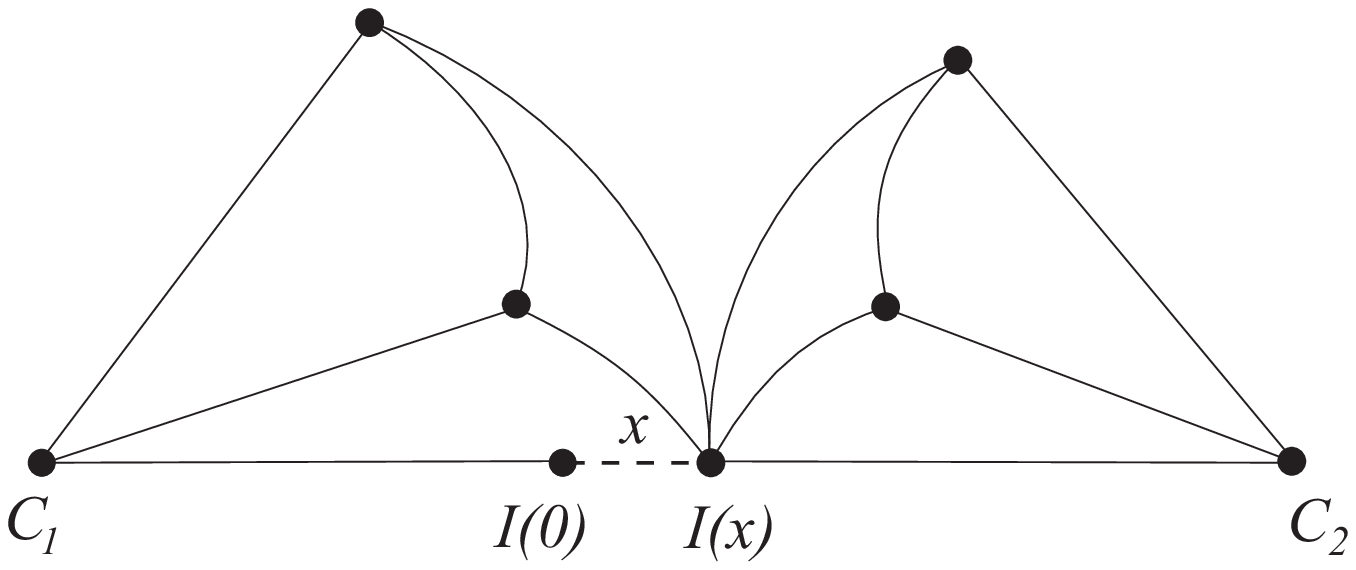} 
\includegraphics[height=40mm]{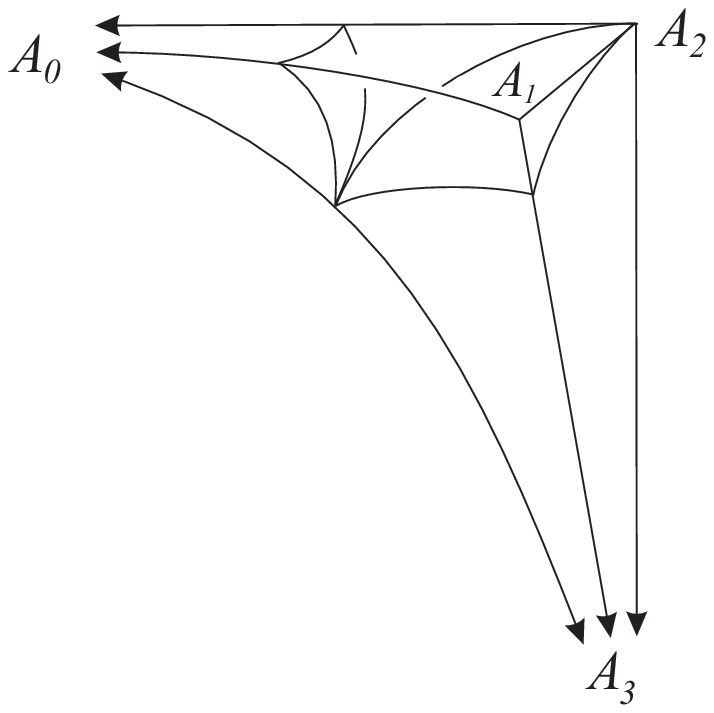}
~~~~~(a)~~~~~~~~~~~~~~~~~~~~~~~~~~~~~~~~~~~~~~~~~~~~~~~~~~~~~~~~~~~~~~(b)
\end{center}
\caption{(a) Lemma 1: two horoballs centered at $C_1$ and $C_2$ tangent at point $l(x)$. (b) Doubly asymptotic case $\mathcal{F}_{\overline{N}_4}$, the horoball types of $\mathcal{B}_0$ and $\mathcal{B}_3$ can be varied between the two limiting cases. The case where $\mathcal{B}_3$ is maximal is pictured.}
\label{fig:2horoballs}
\end{figure} 

\subsubsection{Doubly Asymptotic Case}

\begin{proposition}
The optimal horoball packing density for Coxeter simplex tiling $\cT_{\overline{N}_4}$ is $\delta_{opt}(\overline{N}_4) \approx 0.71644896$.
\end{proposition}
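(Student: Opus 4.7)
The plan is to adapt the approach of Proposition \ref{proposition:s4}, but now accounting for the two ideal vertices of $\cF_{\overline{N}_4}$ and hence two horoball types whose radii must be jointly optimized. The Coxeter diagram $[4,~^{3,4}_4]$ and Table \ref{table:simplex_list} indicate that two of the five vertices of $\cF_{\overline{N}_4}$ lie on $\partial\mathbb{H}^4$; label these $A_0$ and $A_3$, and center horoballs $\mathcal{B}_0(s_0)$ and $\mathcal{B}_3(s_3)$ at them.

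First I would fix a projective coordinate system in the Cayley--Klein ball model so that $A_0=(1,0,0,0,1)$ and $A_3$ lies on the absolute quadric in a convenient position (for instance at $(1,0,1,0,0)$ or $(1,0,0,0,-1)$, so that the horosphere equations \eqref{eq:horoMW}--\eqref{eq:horoY} apply directly). Solving the dihedral-angle conditions coming from the Coxeter scheme determines the remaining vertices $A_1, A_2, A_4$ and the facet forms $\mathbf{u}_0, \dots, \mathbf{u}_4$. Next, for each $i\in\{0,3\}$, I would compute the perpendicular foot $F_i$ of the ideal vertex $A_i$ on the opposite hyperface using the formula from Section 2.1, and plug $F_i$ into the appropriate horosphere equation to find the maximal admissible type parameter $s_i^{\max}$ (the largest horoball centered at $A_i$ that does not protrude through the facet opposite $A_i$).

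The key step is the joint optimization. The two horoballs share the edge $\overline{A_0 A_3}$, so along this edge we can trade volume between them while maintaining tangency at a point $I(x)$. Lemma \ref{lemma:szirmai} guarantees that the combined volume $V(x)=vol(\mathcal{B}_0\cap\tau_0)+vol(\mathcal{B}_3\cap\tau_3)$ is \emph{strictly convex} in the signed distance $x$ from the balanced midpoint and hence attains its maximum only at the endpoints of the admissible interval of $x$. At one endpoint $\mathcal{B}_0$ reaches its maximal type $s_0^{\max}$ (tangent to the facet opposite $A_0$) while $\mathcal{B}_3$ is tangent to $\mathcal{B}_0$ along the edge; at the other endpoint the roles are reversed. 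By the symmetry imposed by the index-$2$ subgroup relations visible in Figure \ref{fig:lattice_of_subgroups}, I would expect these two extremes to yield identical densities, but in any case one computes both and keeps the larger.

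For each extreme configuration I would follow the recipe of Proposition \ref{proposition:s4}: parametrize the simplex edges $A_i A_j$, intersect with $\partial\mathcal{B}_0$ and $\partial\mathcal{B}_3$ to obtain horospheric vertex points $H_{ij}$, convert hyperbolic distances $l_{ij}$ (computed via formula (5)) into horospheric edge lengths $L_{ij}=2\sinh(l_{ij}/2)$ by \eqref{eq:horo_dist}, and apply the Cayley--Menger determinant to obtain the Euclidean $3$-volume of the horospheric tetrahedra. Bolyai's formula \eqref{eq:bolyai} with $n=4$ then yields the horoball-piece volumes, and the optimal density is
\begin{equation}
\delta_{opt}(\overline{N}_4) = \frac{vol(\mathcal{B}_0\cap\cF_{\overline{N}_4})+vol(\mathcal{B}_3\cap\cF_{\overline{N}_4})}{\pi^2/288} \notag
\end{equation}
which should evaluate to the claimed $0.71644896\dots$.

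The main obstacle I anticipate is bookkeeping rather than conceptual: one must verify that in the chosen extreme configuration neither horoball protrudes through any facet other than the one to which it is tangent, so that the packing is genuinely $\Gamma_{\overline{N}_4}$-invariant and extends to all of $\mathbb{H}^4$. A useful consistency check is that $\overline{N}_4$ appears as an index-$2$ supergroup of a simply asymptotic group in Figure \ref{fig:lattice_of_subgroups}, so by the remark in Section 3 the optimal density should match that of the corresponding simply asymptotic case (Corollary following Proposition \ref{proposition:s4}), which is precisely $0.71644896\dots$.
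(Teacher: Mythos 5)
Your proposal is correct and follows essentially the same route as the paper: place horoballs at the two ideal vertices, bound each type by tangency with the opposite facet, observe that both maximal horoballs would overlap so the optimum occurs with the balls mutually tangent along $\overline{A_0A_3}$, and then invoke Lemma \ref{lemma:szirmai} to push the tangency point to an endpoint of the admissible interval before computing volumes via the Cayley--Menger determinant and Bolyai's formula. The only cosmetic difference is that the paper identifies the winning endpoint directly (the configuration with $\mathcal{B}_3$ maximal, $s=1/3$, using the explicit ratio $V_0(0)=2V_3(0)$), whereas you evaluate both endpoints and keep the larger, which is equally valid.
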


\begin{proof}
Parameterize the fundamental domain $\cF_{\overline{N}_4}$ according to Table \ref{table:data_mult}, so that the two asymptotic vertices are at two opposite poles of the ball model.
Let the corner of the simplex at $A_0$ be $\tau_0$ and that at $A_3$ be $\tau_3$. Place two horoballs $\mathcal{B}_0(s_0)$ and $\mathcal{B}_3(s_3)$ that pass through $(1,0,0,0,s_0)$ and $(1,0,0,0,s_3)$ respectively at $A_0$ and $A_3$.
Let $x_i = \tanh^{-1}(s_i)$ denote the hyperbolic distance of the center of the model $(1,0,0,0,0)$ and the point $S=(1,0,0,0,s_i)$. 
If the two horoballs are of maximal type, $\mathcal{B}_0(0)$ and $\mathcal{B}_3(1/3)$ are tangent to their respective hyperfaces $[\mathbf{u_0}]$ and $[\mathbf{u_3}]$, then their interiors intersect so the packing density is optimal when the horoballs are tangent at one point. Set $s = s_0 = s_3$,
see Figure \ref{fig:2horoballs} (b). Let $\mathcal{B}_i(s) = B_i(x)$, and 
define $V_0(x) = vol(B_0(x) \cap \tau_0)$ and $V_3(x) = vol(B_3(x) \cap \tau_3)$.  
With the techniques of Proposition \ref{proposition:s4} and horosphere equations \eqref{eqn:horosphere1} 
and \eqref{eq:horoY}, we compute that $V_0(0) \approx 0.0138889$ and $V_3(0) \approx 0.00694444$. 
The corner of the simplex at $\tau_1$ is half the size of that at $\tau_3$ so
we have that $2 V_0(0) = V_3(0)$ when $x=s=0$. 
By Lemma \ref{lemma:szirmai}
\begin{equation}
V(x) = V_0(0) e^{3 x}+2 V_0(0)e^{-3 x}
\end{equation}
which is maximal when $x$ is maximized, this happens when $s=1/3$.
\begin{equation}
\delta_{opt}(\overline{N}_4) = \frac{vol(\mathcal{B}_0(1/3) \cap \cF_{\overline{N}_4}) + vol(\mathcal{B}_3(1/3) \cap \cF_{\overline{N}_4})}{vol(\cF_{\overline{N}_4})} \approx 0.71644896. 
\end{equation}
The data for the optimal horoball packing is summarized in Table \ref{table:data_mult}. The symmetry group $\Gamma_{\overline{N}_4}$ carries the density from the fundamental domain to the entire tiling.
\qed
\end{proof}
Similarly to the above proof we obtain
\begin{corollary}
The optimal horoball packing density for Coxeter simplex tiling $\cT_{\overline{BP}_4}$ is $\delta_{opt}(\overline{BP}_4) \approx 0.71644896$.
\end{corollary}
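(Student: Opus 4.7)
The proof will proceed in direct parallel to the argument just given for $\cT_{\overline{N}_4}$, with the simplex $\cF_{\overline{BP}_4}$ replacing $\cF_{\overline{N}_4}$ throughout.

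First I would set up projective coordinates for the five vertices of $\cF_{\overline{BP}_4}$ in the Cayley--Klein ball model, placing the two ideal vertices at antipodal points $(1,0,0,0,\pm 1)$ of the absolute quadric so that the horospheres centered at them take the clean forms (\ref{eqn:horosphere1}) and (\ref{eq:horoMW}). The outer normal $\mathbf{u}_i$ of each facet can then be read off from the Grammian determined by the Coxeter diagram $[4, 3^{[4]}]$, giving a concrete hyperplane description of the cell. At each asymptotic vertex $A$ I would next compute the maximal type parameter $s^{\max}$ for the largest admissible horoball centered there, characterized by tangency with the opposite hyperface; this is done by dropping a perpendicular foot via formula (6) and substituting into the horosphere equation. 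As in the $\overline{N}_4$ case, the two individually maximal horoballs are expected to overlap, so the admissible packings form a one-parameter family in which the two horoballs are mutually tangent along the common ideal edge.

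The key step is Lemma \ref{lemma:szirmai}: writing $V(x) = vol(\mathcal{B}_0(x)\cap \tau_0)+vol(\mathcal{B}_k(x)\cap \tau_k)$ for the combined horoball volume in the two ideal corners, we have $V(x) = \tfrac{V(0)}{2}\bigl(e^{3x}+e^{-3x}\bigr)$ with $n-1 = 3$, so $V$ strictly increases as $|x|$ grows and is therefore maximized at the endpoint of the admissible interval --- precisely when one horoball attains its maximal type and the other shrinks to tangency. At that extremum I would evaluate the horoball volume using the Cayley--Menger determinant of the horospherical tetrahedron carved off the facet opposite the maximal horoball, exactly as in Proposition \ref{proposition:s4}, and divide by $vol(\cF_{\overline{BP}_4}) = \pi^2/144$ to obtain the claimed value $\approx 0.71644896$. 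The packing then extends to all of $\mathbb{H}^4$ via the action of $\Gamma_{\overline{BP}_4}$.

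The main technical subtlety will be pinning down the correct volume ratio between $vol(\mathcal{B}_0(0)\cap\tau_0)$ and $vol(\mathcal{B}_k(0)\cap\tau_k)$ at the neutral tangency point $I(0)$ --- the analogue of the $2V_0(0) = V_3(0)$ identity in the preceding proof --- since this determines the constants in the Szirmai-lemma exponential. A clean shortcut for the lower bound is available via Figure \ref{fig:lattice_of_subgroups}: since $|\Gamma_{\overline{N}_4}:\Gamma_{\overline{BP}_4}| = 2$, the cell $\cF_{\overline{BP}_4}$ is a union of two congruent copies of $\cF_{\overline{N}_4}$, and any $\Gamma_{\overline{N}_4}$-invariant packing is automatically $\Gamma_{\overline{BP}_4}$-invariant, so the optimal $\overline{N}_4$ packing descends to a packing of the same density on $\cT_{\overline{BP}_4}$ giving $\delta_{opt}(\overline{BP}_4) \geq 0.71644896$; the matching upper bound then requires the direct Szirmai-lemma computation sketched above to rule out higher-density, less-symmetric packings over the larger fundamental domain.
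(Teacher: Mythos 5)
Your proposal takes essentially the same route as the paper: the authors likewise dispose of $\overline{BP}_4$ by repeating the $\overline{N}_4$ argument (maximal tangent horoballs at the two ideal vertices, Lemma \ref{lemma:szirmai} pushing the tangency point to an endpoint of the admissible interval), and they explicitly invoke the index-two relation $|\Gamma_{\overline{N}_4}:\Gamma_{\overline{BP}_4}|=2$ to note that the fundamental domains are related by doubling and the optimal configurations are essentially the same. Both the direct computation and the subgroup shortcut you describe are exactly the ingredients the paper uses.
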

Note that $|\Gamma_{\overline{N}_4}:\Gamma_{\overline{BP}_4}| = 2$ so the tilings are related by fundamental domain doubling. 
The optimal horoball configurations of $\overline{N}_4$ and $\overline{BP}_4$ are essentially the same.
\subsubsection{Triply Asymptotic Case}
We generalize the above results to the two triply asymptotic tilings using the subgroup relations of the multiply 
asymptotic tilings given in Figure \ref{fig:lattice_of_subgroups}. The indices of the subgroups are 
\begin{equation}
|\Gamma_{\overline{N}_4}:\Gamma_{\overline{M}_4}|=
|\Gamma_{\overline{M}_4}:\Gamma_{\overline{DP}_4}|=|\Gamma_{\overline{BP}_4}:\Gamma_{\overline{DP}_4}|=2,
\end{equation}
the fundamental domains are related by domain doubling, hence the optimal packing density is at least $\delta \approx 0.71644896$ for all multiply asymptotic cases. 
By repeated use of Lemma \ref{lemma:szirmai}, we can show that this value is the optimal packing density for all multiply asymptotic 
cases. We omit the technical details of the proof. 
\begin{proposition}
The optimal horoball packing density for triply asymptotic Coxeter simplex tilings $\cT_\Gamma$, $\Gamma \in \Big\{ \overline{M}_4, 
\overline{DP}_4 \Big\}$ is $\delta_{opt}(\Gamma) \approx 0.71644896$.
\end{proposition}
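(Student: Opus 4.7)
The plan is to split the argument into a lower bound established via the subgroup lattice, and a matching upper bound established via repeated application of Lemma~\ref{lemma:szirmai}.

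For the lower bound I would invoke the index relations $|\Gamma_{\overline{N}_4}:\Gamma_{\overline{M}_4}| = |\Gamma_{\overline{M}_4}:\Gamma_{\overline{DP}_4}| = |\Gamma_{\overline{BP}_4}:\Gamma_{\overline{DP}_4}| = 2$ stated just before the proposition. Because the fundamental domains are related by the domain-doubling construction described in Figure~\ref{fig:lattice_of_subgroups}, any horoball packing invariant under a larger Coxeter group is automatically invariant under each of its index-two subgroups, and the density computed over the (twice-as-large) fundamental domain of the subgroup is unchanged. Hence the optimal $\overline{N}_4$-packing from the previous proposition descends to a $\Gamma_{\overline{M}_4}$-invariant packing of density $\approx 0.71644896$, and the optimal $\overline{BP}_4$-packing descends to $\overline{DP}_4$ (either directly, or through $\overline{M}_4$). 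This already gives $\delta_{opt}(\Gamma) \geq 0.71644896$ for $\Gamma \in \{\overline{M}_4,\overline{DP}_4\}$.

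For the upper bound, let $\cF$ be the triply asymptotic fundamental simplex with ideal vertices $A_1,A_2,A_3$ and corresponding corner regions $\tau_1,\tau_2,\tau_3$, and let $\cB_i(s_i)$ be the horoball of type $s_i$ centered at $A_i$. The admissible region $\Omega \subset \mathbb{R}^3$ of parameter triples is cut out by the three pairwise non-overlap constraints along the edges $\overline{A_iA_j}$ and by the requirement that $\cB_i$ not cross the facet opposite $A_i$. At any optimum at least one constraint must be saturated; when $\cB_i$ and $\cB_j$ are mutually tangent along $\overline{A_iA_j}$ at a point $I(x)$, Lemma~\ref{lemma:szirmai} gives
\begin{equation*}
vol(\cB_i \cap \tau_i) + vol(\cB_j \cap \tau_j) = \tfrac{1}{2}V(0)\bigl(e^{3x}+e^{-3x}\bigr),
\end{equation*}
which is strictly convex in the offset $x$ and hence maximized at the extremes of $x$ compatible with the remaining constraints. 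Iterating Lemma~\ref{lemma:szirmai} along each shared edge in turn drives the configuration to a boundary stratum in which some horoball attains its maximal admissible size (tangent to the opposing facet). The finitely many resulting extremal configurations coincide with the orbits of the optimal packings of the parent tilings $\overline{N}_4$ and $\overline{BP}_4$ under the extra reflection generating $\Gamma_{\overline{N}_4}$ (respectively $\Gamma_{\overline{BP}_4}$), and hence realize exactly $\delta \approx 0.71644896$. Combined with the lower bound this yields equality, and the Coxeter group extends the packing from $\cF$ to all of $\mathbb{H}^4$.

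The main obstacle is the combinatorial book-keeping of the iteration: one must check that every sliding move prescribed by Lemma~\ref{lemma:szirmai} can actually be carried out without violating a currently inactive non-overlap or facet-tangency constraint, rule out the possibility of a spurious triply asymptotic local maximum that breaks the extra reflection symmetry of the parent group, and handle the degenerate strata where one horoball shrinks to the boundary of the type-parameter range. The first of these is routine if tedious case analysis along each of the three edges; the second amounts to comparing densities across all symmetry-breaking boundary configurations and verifying none exceed $0.71644896$. These are precisely the "technical details" the authors elect to suppress.
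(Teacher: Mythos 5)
Your proposal follows essentially the same route as the paper: the lower bound comes from the index-two subgroup relations and domain doubling descending the optimal $\overline{N}_4$ and $\overline{BP}_4$ packings to $\overline{M}_4$ and $\overline{DP}_4$, and the matching upper bound comes from repeated application of Lemma~\ref{lemma:szirmai} to push any configuration to an extremal boundary stratum. The paper likewise omits the case-by-case verification you flag as the main obstacle, so your sketch is at the same level of detail and rigor as the published argument.
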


A summary of the results for multiply asymptotic tilings are given in Table \ref{table:data_mult}. 

\begin{table}
	\begin{tabular}{|l|l|l|l|l|}
		 \hline
		 \multicolumn{5}{|c|}{{\bf Coxeter Simplex Tilings} }\\
		 \hline
		 & \multicolumn{2}{|c|}{{\bf Doubly Asymptotic} } & \multicolumn{2}{|c|}{{\bf Triply Asymptotic} }\\
		 \hline
		 Witt Symb. & $\overline{N}_4$ &  $\overline{BP}_4$ &  $\overline{M}_4$ &  $\overline{DP}_4$ \\
		 \hline
		 \multicolumn{5}{|c|}{{\bf Vertices of Simplex} }\\
		 \hline
		 $A_0$ & $(1,0,0,0,1)^*$ & $(1,0,0,0,1)^*$ & $(1,0,0,0,1)^*$ & $(1,0,0,0,1)^*$ \\
		 $A_1$ & $(1,0,\frac{2}{3},0,\frac{1}{3})$ & $(1,0,\frac{2}{3},0,\frac{1}{3})$ & $(1, 0, 1, 0, 0)^*$ & $(1, 0, 1, 0, 0)^*$ \\
		 $A_2$ & $(1,0,\frac{1}{2},\frac{1}{2},0)$ & $(1,-\frac{1}{2},\frac{1}{2},\frac{1}{2},0)$ & $(1,0,\frac{1}{2},\frac{1}{2},0)$ & $(1,-\frac{1}{2},\frac{1}{2},\frac{1}{2},0)$  \\
		 $A_3$ & $(1,0,0,0,-1)^*$ & $(1, 0, 0, 0, -1)^*$ & $(1, 0, 0, 0, -1)^*$ &$(1, 0, 0, 0, -1)^*$  \\
		 $A_4$ & $(1,\frac{1}{2},\frac{1}{2},\frac{1}{2},0)$ & $(1,\frac{1}{2},\frac{1}{2},\frac{1}{2},0)$ & $(1,\frac{1}{2},\frac{1}{2},\frac{1}{2},0)$ & $(1,\frac{1}{2},\frac{1}{2},\frac{1}{2},0)$ \\
		 \hline
		 \multicolumn{5}{|c|}{{\bf The form $\mbox{\boldmath$u$}_i$ of sides opposite $A_i$ }}\\
		\hline
		 $\mbox{\boldmath$u$}_0$ & $(1, 0, -2, 0, 1)^T$ & $(1, 0, -2, 0, 1)^T$ & $(1, 0, -1, -1, 1)^T$ & $(1, 0, -1, -1, 1)^T$ \\
		 $\mbox{\boldmath$u$}_1$ & $(0, 0, 1, -1, 0)^T$ & $(0, 0, 1, -1, 0)^T$ & $(0, 0, 1, -1, 0)^T$ & $(0, 0, 1, -1, 0)^T$  \\
		 $\mbox{\boldmath$u$}_2$ & $(0, -1, 0, 1, 0)^T$ & $(0, -1, 0, 1, 0)^T$ & $(0, -1, 0, 1, 0)^T$ &  $(0, -1, 0, 1, 0)^T$ \\
		 $\mbox{\boldmath$u$}_3$ & $(-1, 0, 1, 1, 1)^T$ & $(-1, 0, 1, 1, 1)^T$ & $(-1, 0, 1, 1, 1)^T$ & $(-1, 0, 1, 1, 1)^T$ \\
		 $\mbox{\boldmath$u$}_4$ & $(0, 1, 0, 0, 0)^T$ & $(0, 1, 0, 1, 0)^T$ & $(0, 1, 0, 0, 0)^T$ & $(0, 1, 0, 1, 0)^T$  \\
		 \hline
		 \multicolumn{5}{|c|}{{\bf Maximal horoball-type parameter $s_i$ for horoball $\mathcal{B}_i$ at $A_i$ }}\\
		\hline
		 $s_0$ & $0$ & $0$ & $-1/3$ & $-1/3$ \\
		 $s_1$ & $-$ & $-$ & $1/3$ & $1/3$ \\
		 $s_3$ & $1/3$ & $1/3$ & $1/3$ & $1/3$ \\
		 \hline
		 \multicolumn{5}{|c|}{ {\bf Volumes of optimal horoball pieces $V_i = vol(\mathcal{B}_i \cap \mathcal{F}_{\Gamma})$}}\\
		\hline
		 $V_0$ & $0.00491046$ & $0.00982093$ & $0.00491046$ & $0.00982093$ \\
		 $V_1$ & $-$ & $-$ & $0.00491046$ & $0.00982093$ \\
		 $V_2$ & $0.0196419$ & $0.0392837$ & $0.0392837$ & $0.0785674$ \\
		\hline
		 \multicolumn{5}{|c|}{ {\bf Optimal Horoball Packing Density} }\\
		\hline
		$\delta_{opt} $ & 0.71644896 & 0.71644896 & 0.71644896 & 0.71644896\\
		\hline
	\end{tabular}
	\caption{Data for multiply asymptotic Coxeter simplex tilings in the Cayley-Klein ball model of radius $1$ centered at $(1,0,0,0,0)$. Vertices marked with $^*$ are ideal.}
	\label{table:data_mult}
\end{table}

\section{Discussion}

In this paper we have concentrated on horoball packings of asymptotic Coxeter simplex tilings of $\mathbb{H}^4$. The main result of this paper is summarized in the following theorem:

\begin{theorem}
\label{mainresult}
In $\mathbb{H}^4$ the horoball packing density $\delta_{opt}(\cT_{\Gamma}) \approx 0.71644896$ 
is optimal in seven asymptotic Coxeter simplex tilings $\Gamma \in \left\{ \overline{S}_4, \overline{P}_4, \widehat{FR}_4, \overline{N}_4, \overline{M}_4, \overline{BP}_4, \overline{DP}_4  \right\}$, 
when horoballs of different types are allowed at each asymptotic vertex of the tiling. 
\end{theorem}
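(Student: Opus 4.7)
The plan is to consolidate the seven individual cases already established above: the simply asymptotic cases $\overline{S}_4$, $\overline{P}_4$, $\widehat{FR}_4$ follow from Proposition \ref{proposition:s4} and the Corollary that immediately extends its method; the doubly asymptotic cases $\overline{N}_4$ and $\overline{BP}_4$ follow from the preceding Proposition on $\overline{N}_4$ and its Corollary; and the triply asymptotic cases $\overline{M}_4$ and $\overline{DP}_4$ follow from the last Proposition. The theorem is therefore essentially an assembly of these results together with a verification that all seven numerical values coincide. A critical aspect is that the two remaining tilings $\overline{R}_4$ and $\overline{O}_4$ listed in Table \ref{table:simplex_list} realize only $\delta_{opt} \approx 0.60792710$, so the list of seven Witt symbols appearing in the theorem is sharp rather than arbitrary.

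For the simply asymptotic cases I would apply the Proposition \ref{proposition:s4} blueprint uniformly: place the unique ideal vertex $A_0$ of $\cF_\cT$ at $(1,0,0,0,1)$ using the coordinates from Table \ref{table:data_s_a}, solve equation \eqref{eqn:horosphere1} using the perpendicular foot of $A_0$ onto the opposite facet $[\mathbf{u}_0]$ to recover the maximal admissible horoball-type parameter $s$, intersect the resulting horosphere with the four edges incident to $A_0$, compute the Euclidean volume $\mathcal{A}$ of the horospheric tetrahedron via Bolyai's arc-length formula \eqref{eq:horo_dist} and the Cayley--Menger determinant, apply \eqref{eq:bolyai} to obtain $vol(\mathcal{B}_0 \cap \cF_\cT)$, and divide by the tabulated simplex volume in Table \ref{table:simplex_list} to arrive at $0.71644896$.

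For the multiply asymptotic cases I would allow horoballs $\mathcal{B}_i(s_i)$ of possibly differing types at each ideal vertex. Starting with $\overline{N}_4$ (two ideal vertices $A_0, A_3$), I parameterize the mutual tangency along $\overline{A_0A_3}$ by $x=\tanh^{-1}(s)$ and invoke Lemma \ref{lemma:szirmai} with $n=4$: the sum $V(x)$ of the two horoball-sector volumes takes the form $\tfrac{V(0)}{2}(e^{3x}+e^{-3x})$ and therefore increases monotonically away from the midpoint, so the optimum is attained at the boundary of the admissible interval, where one of the horoballs becomes tangent to its opposite facet. The triply asymptotic cases $\overline{M}_4$ and $\overline{DP}_4$ then inherit the lower bound $0.71644896$ via the index-two subgroup relations $|\Gamma_{\overline{N}_4}:\Gamma_{\overline{M}_4}| = |\Gamma_{\overline{BP}_4}:\Gamma_{\overline{DP}_4}| = 2$ from Figure \ref{fig:lattice_of_subgroups}, since fundamental-domain doubling preserves the horoball density ratio.

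The main obstacle is the matching upper bound in the triply asymptotic cases, where three horoball types $s_0$, $s_1$, $s_3$ must be optimized jointly. My plan is to iterate Lemma \ref{lemma:szirmai} along each ideal edge in turn: fixing any two parameters, the third must be driven to the boundary of its admissible range, so the global optimum lies at a corner of the feasible polytope where a maximal subfamily of horoballs are tangent either to each other or to an opposite facet. A direct comparison among the finitely many such extremal configurations, organized using the data in Table \ref{table:data_mult}, yields the density $0.71644896$ in each case and completes the proof.
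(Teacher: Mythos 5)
Your proposal is correct and follows essentially the same route as the paper: the theorem is an assembly of Proposition \ref{proposition:s4} and its corollary (simply asymptotic cases), the $\overline{N}_4$ proposition with its $\overline{BP}_4$ corollary (doubly asymptotic), and the triply asymptotic proposition, with the observation that $\overline{R}_4$ and $\overline{O}_4$ fall short at $0.60792710$. Your sketch of the triply asymptotic upper bound via repeated application of Lemma \ref{lemma:szirmai} along ideal edges is precisely the strategy the paper invokes while omitting the technical details.
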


\begin{remark}
Consider two horoball packings to be in a same class if their symmetry groups are isomorphic. 
In this sense one can distinguish between three different horoball packings of optimal density. 
\end{remark}

The optimal packing density obtained in Theorem \ref{mainresult} is the densest ball packing of $\mathbb{H}^4$ known to the authors at the time of writing. It is greater than the value $(5-\sqrt{5})/4 \approx 0.69098301$
 conjectured by L. Fejes T\'oth as the realizable packing density upper bound, pp. 323 \cite{FTL}. 
However, it does not exceed the B\"or\"oczky-type upper bound for $\mathbb{H}^4$ of $0.73046\dots$. The packings we described give a new lower bound for the optimal ball packing density of $\mathbb{H}^4$. 

\begin{corollary}
The optimal ball packing density $\delta_{opt}$ of $\mathbb{H}^4$ is bounded between
$$0.71644896\dots \leq \delta_{opt} \leq 0.73046\dots.$$
\end{corollary}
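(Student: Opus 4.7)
The plan is to read this corollary as a direct assembly of two facts that have already been established or cited in the paper, and my proof proposal will simply make that assembly explicit and justify the step that moves from a density achieved in a fundamental domain to a density for a packing of all of $\mathbb{H}^4$.

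First I would dispatch the lower bound by appealing directly to Theorem \ref{mainresult}. For any of the seven Coxeter simplex tilings $\cT_\Gamma$ listed there, the optimal horoball configuration $\cB_\Gamma$ inside the fundamental simplex $\cF_{\cT_\Gamma}$ is, by construction, invariant under the reflections generating $\Gamma$ (we verified in the proofs of Proposition \ref{proposition:s4} and its doubly/triply asymptotic analogues that no chosen horoball extends past the facet opposite its ideal center). The $\Gamma$-orbit of $\cB_\Gamma$ therefore produces a globally defined horoball packing of $\mathbb{H}^4$, and since the Dirichlet--Voronoi cell of each horoball in the orbit coincides with the corresponding Coxeter simplex, the local density $\delta(\cB_\Gamma)$ computed in the fundamental domain equals the global density in the sense of Section 3. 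Hence $\delta_{opt} \geq \delta_{opt}(\cT_\Gamma) \approx 0.71644896$. Horoballs enter the quantity $\delta_{opt}$ as the limiting case of ball packings of diverging radius, so this is a legitimate lower bound for the supremum defining $\delta_{opt}$.

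For the upper bound I would invoke the Böröczky--Florian--Kellerhals theory cited at the beginning of the paper. Specifically, by Kellerhals \cite{K98} the local density of every ball (of any radius, including the horoball limit) in its Dirichlet--Voronoi cell in $\mathbb{H}^n$ is bounded above by the simplicial horoball density $d_n(\infty)$, in all dimensions covered by Marshall's monotonicity \cite{Ma99}. In dimension $n=4$ this evaluates to $d_4(\infty) \approx 0.73046$. Since the notion of density we are using (introduced in Section 1 and restricted in Section 3 to periodic packings) is defined as a supremum of such local densities, the bound $\delta_{opt} \leq 0.73046\dots$ is immediate.

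Combining the two inequalities yields the asserted sandwich
$$0.71644896\dots \leq \delta_{opt} \leq 0.73046\dots.$$
The only step that requires genuine care, and the one I would be most careful to write out, is confirming that the density realized in a single Coxeter simplex $\cF_{\cT_\Gamma}$ extends consistently to a well-defined global density on all of $\mathbb{H}^4$; this is precisely where invariance under $\Gamma$ and the coincidence of the Dirichlet--Voronoi cells with the simplices are used. Everything else is quotation of Theorem \ref{mainresult} and of the upper bound already acknowledged in the Discussion.
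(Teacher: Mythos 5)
Your proposal is correct and follows essentially the same route as the paper: the lower bound is exactly the realizable density of Theorem \ref{mainresult} (extended to all of $\mathbb{H}^4$ by the Coxeter group action, as set up in Section 3), and the upper bound is the quoted B\"or\"oczky--Kellerhals simplicial horoball density $d_4(\infty)\approx 0.73046$. The paper treats both halves as immediate consequences of the preceding discussion, just as you do.
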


In this paper we considered the generalized simplicial densities of the horoball packings. It would be instructive to compare to the local Dirichlet--Voronoi densities of each horoball in our family of packings, and present the density of the packing as a weighted average over the cells. Results on the monotonicity of simplicial density function $d_n(r)$ for $n=4$ may help establish the optimality of our packings in $\mathbb{H}^4$ as in the case of $\mathbb{H}^3$ (cf. Section 1). These questions are the subject of ongoing research.

%%%%%%%%%%%%%%%%%%%%%%%%%%%%%%%%%%%%%%%%%%%%%%%%%%%%%%%%%%%%%%%%%%%%%%%%%%%%%%%%%%%%%%%%%%%%%

%============================================================================%
%                                references                                  %
%============================================================================%

\end{document}